\numberwithin{equation}{section}
\newtheorem{theorem}{Theorem}[section]
\newtheorem{proposition}[theorem]{Proposition}
\newtheorem{lemma}[theorem]{Lemma}
\newtheorem{remark}[theorem]{Remark}
\newtheorem*{definition*}{Definition}
\newcommand{\supp}{{\rm supp}}
\newcommand{\dist}{\mathop{\mathrm{dist}}\nolimits}
\newcommand{\ddc}{{\rm dd^c}}
\newcommand{\dc}{{\rm d^c}}
\newcommand{\dd}{{\rm d}}
\newcommand{\dbar}{\overline\partial}
\newcommand{\vep}{\varepsilon}
\newcommand{\cM}{\mathcal{M}}
\newcommand{\FS}{{\rm FS}}
\newcommand{\B}{\mathbb{B}}
\newcommand{\D}{\mathbb{D}}
\newcommand{\C}{\mathbb{C}}
\renewcommand\P{\mathbb{P}}
\newcommand{\lp}{\langle}
\newcommand{\rp}{\rangle}
\newcommand{\oL}{\mathcal{L}}
\newcommand{\oI}{\mathcal{I}}
\newcommand{\fh}{\mathfrak{h}}
\newcommand{\bP}{\mathbf{P}}
\newcommand{\bd}{\mathbf{bd}}
\title{Asymptotics of hole probability regarding open balls for random sections on compact Riemann surfaces}
\author{Hao Wu}
\address{School of Mathematics,  Nanjing University - Nanjing - China 210093}
\email{haowu@nju.edu.cn}
\date{}
\thanks{}
\begin{document}

\begin{abstract}
We obtain the asymptotic behavior of hole probability for random holomorphic sections on a compact Riemann surface with respect to the hole size.
\end{abstract}

\clearpage\maketitle
\thispagestyle{empty}

\noindent\textbf{Mathematics Subject Classification 2020:}  31A05, 32L10, 60D05. 

\medskip

\noindent\textbf{Keywords:} random section,  hole probability, quasi-subharmonic function, upper envelop.

\setcounter{tocdepth}{1}
 \normalfont

\section{Introduction}

Let \( (X,\omega_0) \) be a compact Riemann surface, and let \( \mathcal{L} \) be a positive holomorphic line bundle on \( X \) with \( \deg(\mathcal{L}) \geq 1 \). We fix a Hermitian metric \( \fh \) on \( \mathcal{L} \) such that the Chern curvature form \( c_1(\oL, \fh) \) is strictly positive. The normalized \((1,1)\)-form
\[
\omega := c_1(\oL, \fh)/\deg(\mathcal{L})
\]
is a smooth probability measure on \( X \), since \( \int_X \omega = 1 \).

For every positive integer \( n \), the \( n \)-th power \( \mathcal{L}^n := \mathcal{L}^{\otimes n} \) of the line bundle \( \mathcal{L} \) inherits a natural metric \( \fh_n \) induced by \( \fh \). Specifically, for any holomorphic section $s$ of $\oL$, $s^{\otimes n}$ is a holomorphic section of $\oL^n$, and we have
$$ \|s^{\otimes n}\|_{\fh_n} (x):= \|s\|_{\fh}^n (x)  \quad\text{for every}\quad x\in X. $$ 
Let \( (\cdot, \cdot)_n \) be  the  Hermitian inner product at each point $x$ corresponding to the Hermitian metric \( \fh_n \).

On the space \( H^0(X, \mathcal{L}^n) \) of global holomorphic sections of \( \mathcal{L}^n \), we define a global Hermitian inner product as follows:
\begin{equation*}
\langle s_1, s_2 \rangle_n := \int_X (s_1(x), s_2(x))_n \, \omega_0 (x) \quad \text{for} \quad s_1, s_2 \in H^0(X, \mathcal{L}^n).
\end{equation*}

The Riemann-Roch theorem says that
\[
\dim_{\mathbb{C}} H^0(X, \mathcal{L}^n) = n \cdot \deg(\mathcal{L}) - g + 1.
\]
The projectivized space \( \mathbb{P}H^0(X, \mathcal{L}^n) \) is well-defined.  We shall denote by \( V^{\FS}_n \) the Fubini-Study volume form on \( \mathbb{P}H^0(X, \mathcal{L}^n) \) induced by \( \langle \cdot, \cdot \rangle_n \).

\smallskip

The zero set of a section in $H^0(X,\oL^n)\setminus\{0\}$ doesn't change if we multiply the section by a non-zero constant in $\C$. Therefore, we can denote by $Z_s$ the zero set of a section $s$ in $H^0(X,\oL^n)\setminus\{0\}$ or of an element $s$ in $\P H^0(X,\oL^n)$. The points in $Z_s$ are counted with multiplicity. So $Z_s$ defines an effective divisor of degree $n\deg(\oL)$ that we still denote by $Z_s$. Let $[Z_s]$ be the sum of Dirac masses of the points in $Z_s$ and $$\llbracket Z_s \rrbracket:= n^{-1}\deg(\oL)^{-1} [Z_s]$$
the \textit{empirical measure} with respect to the section $s$. 
 
The study of the zeros of  sections in $H^0(X,\oL^n)\setminus\{0\}$ with respect to the standard complex Gaussian on $H^0(X,\oL^n)$ is equivalent to the study of the zeros of  elements of 
$\P H^0(X,\oL^n)$ with respect to the probability measure $V_n^\FS$, see e.g., \cite[Section 2]{shi-zel-gafa}.  In what follows, by a \textit{random section}, we mean a random element in $H^0(X,\oL^n)\setminus\{0\}$ with respect to the standard complex Gaussian or a random element in $\P H^0(X,\oL^n)$ with respect to  $V_n^\FS$.

A celebrated theorem by Shiffman and Zelditch~\cite{shi-zel-cmp} states that  the zeros of random sections  are equidistributed with respect to \( \omega \). More precisely, for any smooth test function \( \phi \) on \( X \), one has
\[
\lim_{n \to \infty} \int_{\mathbb{P}H^0(X, \mathcal{L}^n)} \lp \llbracket Z_s \rrbracket, \phi \rp \, V^{\FS}_n(s) = \int_X \phi \, \omega.
\]

Our goal is to study the hole probabilities of this distribution. Namely,  for an open subset \( D \) of \( X \) with \( \overline{D} \neq X \), we define, for each large \( n \), the \textit{hole event}
\[
H_{n,D} := \big\{ [s] \in \mathbb{P}H^0(X, \mathcal{L}^n) \mid Z_s \cap D = \varnothing \big\},
\]
which consists all holomorphic sections of \( \mathcal{L}^n \)   non-vanishing on \( D \).  Define the \textit{hole probability}
$$ \bP_n (H_{n,D}):= \int_{H_{n,D}}    V^{\FS}_n.       $$
This quantity has been studied by many researchers in the past two decades, see e.g., \cite{dlm-jussieu,shi-zel-zre-ind,zhu-anapde, zre-michigan}. Recently, the author and  Xie  \cite{WX-hole} derived the optimal convergence speed of the hole probability as $n\to \infty$. It is worth mentioning that the author \cite{DGW-hole-event} also  proved that the zeros of random sections in $H_{n,D}$ are equidistributed, together with  Dinh and Ghosh.

\medskip

In this article, we only consider the case $D=\B(x,r)$ and  focus on the asymptotic behavior of $\bP_n(H_{n, \mathbb{B}(x, r)})$ as the radius  $r\to 0$, measured with respect to the K\"ahler metric \(\omega_0\).
The following is our main result of this article.

\begin{theorem} \label{main theorem}
For any $x\in X$, as $r\to 0$, there exist a constant \( C_x > 0 \) such that
\begin{equation}
\label{thm 2 estimate}
 \lim_{n \to \infty} \frac{1}{n^2 \deg(\mathcal{L})^2} \log \bP_n(H_{n, \mathbb{B}(x, r)}) =-C_x e^2 \pi^2 r^4+O(r^5).
\end{equation}
\end{theorem}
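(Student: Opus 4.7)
\medskip

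\noindent\textbf{Proof proposal.}
My plan is to reduce Theorem~\ref{main theorem} to a small-radius expansion of the pluripotential-theoretic rate functional of the hole probability, and then to perform this expansion using a flat local model at $x$.

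First, I would invoke the main result of \cite{WX-hole}, which identifies
\[
\lim_{n\to\infty}\frac{1}{n^2\deg(\oL)^2}\log\bP_n(H_{n,D}) = -\oE(D),
\]
with $\oE(D)$ an energy-type functional associated to the upper envelope
\[
\varphi_D^*(y) := \sup\bigl\{\psi(y)\, :\, \psi \in \QSH(X,\omega),\ \psi \le 0 \text{ on } X \setminus D,\ \sup_X\psi\le 0\bigr\}
\]
or to a closely related obstacle weight encoding the constraint $Z_s \cap D = \varnothing$. The precise form of $\oE$ is given in \cite{WX-hole}. The theorem then reduces to establishing
\[
\oE\bigl(\B(x,r)\bigr) = C_x e^2 \pi^2 r^4 + O(r^5) \qquad (r\to 0^+).
\]

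Next, I would localize near $x$. In a holomorphic chart $z$ centered at $x$, choose a local K\"ahler potential $\varphi_\omega$ of $\omega$ with $\varphi_\omega(z) = \rho(x)|z|^2 + O(|z|^4)$, where $\rho(x)$ is the density of $\omega$ with respect to Lebesgue measure at $x$, and similarly expand the density of $\omega_0$. The envelope $\varphi^*_{\B(x,r)}$ is, to leading order as $r\to 0$, governed by an obstacle problem which after the rescaling $z = rw$ converges to an explicit extremal quasi-subharmonic function $\Psi$ on $\C$ with obstacle on the unit disc; the solution is essentially a truncated $\log|w|$ profile and can be computed in closed form.

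The leading coefficient $C_x e^2 \pi^2$ then arises by substituting $\Psi$ into the rescaled energy: the factor $r^4$ reflects one power of area from the ball and one from the Monge--Amp\`ere energy being bilinear in the weight, while the factor $e^2\pi^2$ is the universal constant produced by the flat extremal profile, and $C_x$ captures the dependence on $\rho(x)$ and the Chern curvature at $x$. The remainder $O(r^5)$ is obtained by carrying the Taylor expansions of $\varphi_\omega$ and of the density of $\omega_0$ one order further and exploiting Lipschitz-type a priori estimates on $\varphi^*_{\B(x,r)}$ to transfer the flat-model computation to the compact surface.

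The main obstacle I anticipate is the sharp identification of the constant: producing the exact factor $e^2\pi^2$ (not merely the correct order $r^4$) demands a careful matched-asymptotic expansion between the global envelope $\varphi_{\B(x,r)}^*$ and the rescaled flat model, together with a precise treatment of the free boundary of the obstacle at $\partial \B(x,r)$. Verifying that the $O(|z|^4)$ corrections to $\varphi_\omega$ contribute only at order $r^5$, rather than polluting the leading term, is the most technical step, and likely requires a second-order stability estimate for the upper envelope under perturbation of the K\"ahler potential.
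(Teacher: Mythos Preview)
Your reduction via \cite{WX-hole} to a small-$r$ expansion of $\min\oI_{\omega,\B(x,r)}$ matches the paper exactly (its Theorem~\ref{thm-main-hole}), and the idea of computing the leading term from an explicit flat local model is shared. The gap is in the localization step. To rescale $z=rw$ and pass to a limiting obstacle problem on $\C$, you implicitly need the set where the extremal potential is nonzero to lie inside $\B(x,Cr)$ for some fixed $C$. This is not obvious a priori: the minimizer $\nu_{\omega,r}$ is a global object on $X$, and your ``Lipschitz-type a priori estimates'' are too vague to supply it. Without this, the rescaled problem does not even make sense and the matched-asymptotic expansion you describe cannot get off the ground.

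The paper circumvents this by a different mechanism. After computing the flat model when $\omega$ is flat on a fixed neighborhood (Proposition~\ref{prop-r0}), Section~\ref{sec-loc} proves that flatness of $\omega$ merely on $\B(x,2r)$ already forces $\min\oI_{\omega,r}$ to equal the flat value \emph{exactly} (Proposition~\ref{prop-2r}). The device is Lemma~\ref{lem-miniOmega}: restrict to the class $\Omega_r$ of measures whose potential vanishes outside $\B(x,2r)$, average over rotations, and use strict convexity of $\oI_{\omega,r}$ (Lemma~\ref{l:convex}) to reduce to the radial case; then an envelope comparison pushes any competitor into $\Omega_r$ without raising the energy. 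This convexity-plus-symmetrization argument is the technical core and replaces the localization you would need. The remaining perturbation off flat $\omega$ (Section~\ref{sec-bundle}) is done by explicit barrier constructions together with the monotonicity Lemma~\ref{lem-compare-oI}, not by abstract stability estimates for the envelope; non-flatness of $\omega_0$ is handled by sandwiching $\B(x,r)$ between two Euclidean discs (Section~\ref{sec-dis}). One minor correction: generically the local potential is $\rho(x)|z|^2+O(|z|^3)$, not $O(|z|^4)$; the paper works throughout with the density expansion $\omega=(1+O(|z|))\alpha\,i\,\dd z\wedge\dd\bar z$, which is what produces the $O(r^5)$ remainder.
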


The value of $C_x$ is determined by the formal equation: $$\omega(x)=2\sqrt {C_x} \, \omega_0(x).$$ In particular, when $\omega_0=\omega$, $C_x=1/4$ for all $x\in X$.

\medskip

In \cite{WX-hole}, the author and Xie  established that (see also \cite{DGW-hole-event})
  $$ \Big|\frac{1}{n^2 \deg(\oL)^2} \log \bP_n(H_{n,D}) + \min \oI_{\omega,D} \Big|  =O\Big( \frac{\log n}{n}\Big)  \quad\text{as}\quad n\to \infty,$$
where $\oI_{\omega,D}$ is a functional defined on the space of all probability measures on  $X\setminus D$, which will be introduced in Section \ref{sec-pre}. Thus, to prove Theorem \ref{main theorem}, it is enough to show 

\begin{theorem}\label{thm-main-hole}
There exist positive constants  $c_x,C_x$ independent of $r$, such that for all $r>0$,
$$ | \min \oI_{\omega,\B(x,r)} -C_x  e^2   \pi^2 r^4 | \leq c_x r^5.      $$
\end{theorem}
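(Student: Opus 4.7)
The plan is a local perturbative analysis of $\oI_{\omega,\B(x,r)}$ around the unconstrained minimizer $\omega$: as $r\to 0$, the constrained minimum approaches $0$ at a rate controlled by a local balayage problem near $x$. I would first reduce to a Euclidean computation in isothermal coordinates, then build an explicit trial measure for the upper bound, and close with a variational lower bound.

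Pick an isothermal coordinate $z$ centered at $x$ in which $\omega_0 = \frac{i}{2}dz\wedge d\bar z\,(1+O(|z|^2))$ and $\omega = f\cdot\frac{i}{2}dz\wedge d\bar z\,(1+O(|z|))$, where $f := \omega(x)/\omega_0(x) = 2\sqrt{C_x}$. The Riemannian ball $\B(x,r)$ then coincides with the Euclidean disk $B_r = \{|z|<r\}$ up to an $O(r^2)$ boundary perturbation, and $\omega(\B(x,r)) = f\pi r^2 + O(r^3)$. For the upper bound, I would take as trial measure
\[
\tilde\mu_r := \mathbf{1}_{X\setminus\B(x,r)}\,\omega + \nu_r,
\]
where $\nu_r$ is the balayage of $\omega|_{\B(x,r)}$ onto the complement --- to leading order, the uniform probability on $\partial B_r$ of total mass $\omega(\B(x,r))$. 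The classical computation of the logarithmic potential of a uniformly weighted disc yields
\[
U^{\omega|_{B_r}-\nu_r}(z) = \frac{\pi f}{2}(r^2-|z|^2)\,\mathbf{1}_{|z|<r} + O(r^3),
\]
vanishing outside $B_r$. Substituting into the quadratic form defining $\oI_{\omega,\B(x,r)}$ and incorporating the external-field/weight contribution from Section~\ref{sec-pre} (which supplies the factor $e^2$, absent from naive pure-balayage energy) gives $\oI_{\omega,\B(x,r)}(\tilde\mu_r) = C_x e^2 \pi^2 r^4 + O(r^5)$.

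For the matching lower bound I would invoke the variational characterization of the true minimizer $\mu_r^\star$: by strict convexity of $\oI_{\omega,D}$ on the affine slice of probability measures supported in $X\setminus\B(x,r)$, $\mu_r^\star$ is determined uniquely by a Frostman-type equilibrium identity. Since the trial measure $\tilde\mu_r$ solves this identity modulo $O(r^3)$ corrections coming from the three perturbative sources listed below, a standard duality / quadratic-expansion argument in weighted potential theory gives $\oI(\mu_r^\star) \geq \oI(\tilde\mu_r) - O(r^5)$. The $O(r^5)$ error absorbs three independent contributions controlled uniformly in $r$: the Riemannian-vs.-Euclidean discrepancy of $\partial\B(x,r)$, the linear Taylor term of the $\omega$-density at $x$ (which integrates to zero on circles by rotational symmetry, leaving only a quadratic correction), and the smooth difference between the Green's function of $(X,\omega_0)$ and the Euclidean kernel $-\log|z-w|$.

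The main obstacle is the quantitative matching to $O(r^5)$ precision: identifying the trial measure with the correct coefficient $C_x e^2 \pi^2$ --- rather than just $C_x \pi^2$ from naive balayage, the multiplicative $e^2$ coming from the weight component of $\oI_{\omega,D}$ --- and then leveraging convexity around the minimizer to upgrade the deviation of $\mu_r^\star$ from $\tilde\mu_r$ into a genuinely $O(r^5)$ energy gap rather than a softer $o(r^4)$ one.
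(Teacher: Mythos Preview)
Your trial measure is the wrong object, and this is not a cosmetic issue but the heart of the problem. Write $\oI_\omega(\mu)=2\max_X U^\star_\mu+\int_X dU^\star_\mu\wedge d^c U^\star_\mu$, which follows from \eqref{diff-potential} and Stokes. The second term is the quadratic Dirichlet energy you have in mind, but the first term is a genuinely non-quadratic $\sup$-penalty coming from the normalization $\max U_\mu=0$. For your balayage measure $\tilde\mu_r=\omega|_{X\setminus B_r}+\nu_r$ one has, in the flat model, $U^\star_{\tilde\mu_r}=\alpha\pi(r^2-|z|^2)\mathbf 1_{B_r}$ up to an additive constant of order $r^4$, so $\max U^\star_{\tilde\mu_r}=\alpha\pi r^2+O(r^4)$ and hence $\oI_{\omega,r}(\tilde\mu_r)=2\alpha\pi r^2+O(r^4)$. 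Your upper bound is of order $r^2$, not $r^4$; the hand-waved ``external-field/weight contribution'' cannot repair this, because it is precisely the $\max$-term that blows up.

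The actual minimizer is not a balayage at all: it solves a free-boundary obstacle problem for the envelope $\sup\{\phi:\ddc\phi\geq-\omega,\ \phi\leq 0,\ \phi|_{\partial B_r}\leq u\}$, and in the flat case the contact set $\{U_{\nu_{\omega,r}}=0\}$ is $X\setminus\overline{\B(x,\sqrt e\,r)}$, not $X\setminus\overline{\B(x,r)}$. The mass swept onto $\partial B_r$ is $\omega(\B(x,\sqrt e\,r))=2e\alpha\pi r^2$, and the annulus $r<|z|<\sqrt e\,r$ carries \emph{no} mass. This free boundary at radius $\sqrt e\,r$ is exactly where the factor $e^2$ comes from; see \eqref{solution-flat} and the computation in Proposition~\ref{prop-r0}. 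Once you have the correct flat minimizer, the paper's route---localize via symmetrization and convexity (Proposition~\ref{prop-2r}), then compare $\omega$ to a nearby flat metric by building explicit sub/super $\omega$-subharmonic barriers (Section~\ref{sec-bundle}), and finally sandwich the Riemannian ball between Euclidean ones (Section~\ref{sec-dis})---is what produces the $O(r^5)$ control. Your ``Frostman identity modulo $O(r^3)$ plus convexity'' outline for the lower bound cannot start until the trial measure is corrected, and even then you would need to confront the same free-boundary structure.
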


\begin{remark}\rm
By a compactness argument, one can take the $c_x$ in Theorem \ref{thm-main-hole} independent of $x$, and hence, the error term $O(r^5)$ in Theorem \ref{main theorem} is also independent of $x$.
\end{remark}

Previously, such hole probabilities with a parameter \( r \) on the hole size were not known for zeros of random holomorphic sections, except in some very special cases where some direct calculations are possible \cite{zhu-anapde, zre-michigan}. 

\smallskip

In fact, instead of random sections,
the estimate~\eqref{thm 2 estimate} is inspired by several works of \textit{random polynomials}, which we now mention. 
In the setting of Gaussian entire functions \[ F(z) := \sum_{n=1}^\infty \frac{\zeta_n}{\sqrt{n!}} z^n \] with independent and identically distributed (i.i.d.)\ standard complex Gaussian coefficients \( \zeta_n \), Sodin and Tsirelson~\cite{sod-tsi-isrj} showed that the exponential decay speed of hole probability with respect to a disc of radius \( r \) is \( \exp(-c r^4) \) as \( r \to \infty \). The optimal constant $c$ was later obtained by Nishry  \cite{nis-imrn}. Recently, Buckley, Nishry, Peled, and Sodin~\cite{buc-nis-ron-sod-ptrf} studied the hole probabilities for zeros of hyperbolic Gaussian Taylor series with finite radii of convergence. See also~\cite{  gho-nis-con, gho-nis-cpam, cho-zei-imrn, kri-jsp, kur-ska-math-stud, nis-jdm, Nishry-Wennman}.

\medskip

The paper is organized as follows. In Section \ref{sec-pre}, we introduce some useful tools from complex analysis and potential theory. In Section \ref{sec-flat}, we compute the exact value of the hole probability under the flat assumption, which gives the leading term of \eqref{thm 2 estimate} in Theorem \ref{main theorem}. The error term of \eqref{thm 2 estimate} will be proved in Sections \ref{sec-bundle} and \ref{sec-dis}. Before that, we will establish  a crucial localization of the problem in Section \ref{sec-loc}.

\medskip
\section{Preliminaries} \label{sec-pre}

In this section, we will introduce the key functional $\oI_{\omega,D}$, and some useful notion in complex analysis, as well as potential theory.

\medskip

Denote by $\cM(X)$ the set of all probability measures on $X$. It carries the following natural \textit{weak topology}: a sequence of probability measures $\mu_n$ converges to $\mu$ weakly, if for any smooth function $\phi$, one has $\lim_{n\to\infty}\int \phi \,\dd \mu_n=\int \phi \,\dd \mu$. Similarly, for any closed subset $K\subset X$, we can define the restriction   $\cM(K)$, which is compact and convex under the weak topology.

\smallskip

A function \(\phi\) on \(X\) with values in \(\mathbb{R} \cup \{-\infty\}\) is called \textit{quasi-subharmonic} if, locally, it can be written as the difference of a subharmonic function and a smooth function. If \(\phi\) is quasi-subharmonic, then there exists a constant \(c \geq 0\) such that \(\ddc \phi \geq -c  \omega\) in the sense of currents ($\dc:=\frac{i}{2\pi}(\dbar-\partial)$ and $\ddc=\frac{i}{\pi}\partial\dbar$). When \(c = 1\), \(\phi\) is called an \textit{\(\omega\)-subharmonic function}, and \(\ddc \phi + \omega\) is a probability measure on \(X\) by Stokes' formula.

For any probability measure \(\mu\) on \(X\), we can write \(\mu = \omega + \ddc U_\mu\), where \(U_\mu\) is the unique quasi-subharmonic function such that \(\max U_\mu = 0\). We call \(U_\mu\) the \(\omega\)-\textit{potential} of \(\mu\).
There is an alternative way to normalize the potential \(U^\star_\mu\) by requiring that \(\int_X U^\star_\mu \, \omega = 0\). We call \(U^\star_\mu\) the  \(\omega^*\)-\textit{potential}   of \(\mu\). By definition,
\begin{equation}\label{diff-potential} U_\mu=U^\star_\mu-\max_X U^\star_\mu.    
\end{equation}

For any \textbf{simply connected} open set $D\subset X$ with \textbf{smooth boundary}, we define
\begin{equation}
\label{functional I_r}
\mathcal{I}_{\omega, D}(\mu) := -\int_X U_\mu \, \omega - \int_X U_\mu \, \dd\mu , \quad \forall \, \mu \in \mathcal{M}(X \setminus D).
\end{equation}
When \(D\) is non-empty, \(\mathcal{I}_{\omega, D}\) is strictly positive.
Recall the following result concerning the functional $\oI_{\omega,D}$ on $\cM(X\setminus D)$ from \cite{DGW-hole-event}.

\begin{lemma}\label{l:convex}
As a functional on   $\cM(X\setminus D)$ endowed with the weak topology, 
$\oI_{\omega,D}$ is lower semi-continuous and strictly convex on the set $\{\oI_{\omega,D}\not=+\infty\}$. It admits a unique minimizer $\nu$ on $\cM(X\setminus D)$ satisfying 
$$ U_\nu=    \sup_\phi  \big\{  \phi \text{ is } \omega\text{-subharmonic}: \; \phi\leq 0 \text{ on } X,\; \phi \leq U_\nu \text{ on } \overline D  \big\}.$$ 
\end{lemma}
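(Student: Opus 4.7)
The plan is to reformulate $\oI_{\omega,D}$ so its algebraic structure is transparent, and then verify each of the three conclusions in turn. Using $U_\mu=U^\star_\mu-\max U^\star_\mu$ from \eqref{diff-potential} together with $\int U^\star_\mu\,\omega=0$, a short computation gives
\[
\oI_{\omega,D}(\mu) \;=\; 2\max_X U^\star_\mu \;-\; \int_X U^\star_\mu\,d\mu.
\]
Let $G$ be the Green function of $\omega$ on $X$---symmetric, $C^\infty$ off the diagonal, with logarithmic singularity along it, and normalized by $\int_X G(x,\cdot)\,\omega=0$. Then $U^\star_\mu(x)=\int_X G(x,y)\,d\mu(y)$, so the first summand is the supremum over $x$ of \emph{affine} functionals of $\mu$, while the second is minus the bilinear Green energy $\cali{E}(\mu):=\iint G\,d\mu\otimes d\mu$.

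For lower semi-continuity, the kernel $-G$ is lower semi-continuous and bounded below on $X\times X$, so Portmanteau applied to $\mu_n\otimes\mu_n$ yields LSC of $-\cali{E}$ along weak convergence. For the max-term I would combine the $L^1$-convergence $U^\star_{\mu_n}\to U^\star_\mu$ with Hartogs-type estimates for quasi-subharmonic families to establish $\max U^\star_{\mu_n}\to\max U^\star_\mu$ (the upper bound being the classical Hartogs inequality; the lower bound obtained by testing at a maximizer of $U^\star_\mu$, using that $G(x,\cdot)$ is upper semi-continuous and bounded above, with an approximation argument if the maximizer lies in an atomic part of $\mu$). This gives LSC of $\oI_{\omega,D}$; together with weak compactness of $\cM(X\setminus D)$ it produces a minimizer $\nu$. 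For strict convexity, $\max U^\star_\mu$ is convex as a supremum of affine functionals, while the quadratic part coming from $-\cali{E}(\mu_1-\mu_2)$ equals the Dirichlet energy of $U^\star_{\mu_1-\mu_2}$, which is strictly positive whenever $\mu_1\neq\mu_2$. Uniqueness of the minimizer then follows.

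For the envelope characterization, let $\Phi$ denote the supremum in the statement. Because $U_\nu$ itself satisfies the admissibility constraints, $\Phi\geq U_\nu$. For the reverse, pass to the usc regularization $\Phi^\star$ (still $\omega$-subharmonic, still $\leq 0$ on $X$ and $\leq U_\nu$ on $\bar D$) and set $\tilde\nu:=\omega+\ddc\Phi^\star$. The aim is to show $\tilde\nu\in\cM(X\setminus D)$ and $\oI_{\omega,D}(\tilde\nu)\leq\oI_{\omega,D}(\nu)$; uniqueness of the minimizer then forces $\tilde\nu=\nu$, so $\Phi^\star=U_\nu$ almost everywhere and hence everywhere by the envelope property, giving $\Phi=U_\nu$. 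Standard obstacle-problem analysis places $\tilde\nu$ on the coincidence set $\{\Phi^\star=0\}\cup(\{\Phi^\star=U_\nu\}\cap\bar D)$, and the comparison $\oI_{\omega,D}(\tilde\nu)\leq\oI_{\omega,D}(\nu)$ follows directly from $\Phi^\star\geq U_\nu$ and the explicit form of the functional. The main obstacle is ensuring $\tilde\nu(D)=0$: this couples the envelope construction back to the variational inequality for $\nu$ (derived from first-order perturbations $\nu+t(\eta-\nu)$ with $\eta\in\cM(X\setminus D)$), and requires using $\supp\nu\subset X\setminus D$ together with the obstacle-coincidence structure to exclude any mass from the interior of $D$.
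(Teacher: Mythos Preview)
The paper does not prove this lemma at all: it is quoted verbatim from \cite{DGW-hole-event} (see the sentence immediately preceding the statement). So there is no ``paper's own proof'' to compare against, and your proposal should be read as an independent attempt.

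Your overall architecture is sound---the decomposition $\oI_{\omega,D}(\mu)=2\max_X U^\star_\mu-\iint G\,d\mu\otimes d\mu$ is correct, strict convexity via the Dirichlet energy of $U^\star_{\mu_1}-U^\star_{\mu_2}$ is the right mechanism, and the envelope argument via uniqueness of the minimizer is the natural route. Two points deserve attention, however.

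First, your argument for the \emph{lower} semicontinuity of $\mu\mapsto\max_X U^\star_\mu$ goes in the wrong direction. Testing at a maximizer $x_0$ of $U^\star_\mu$ and using that $G(x_0,\cdot)$ is upper semicontinuous and bounded above, Portmanteau gives $\limsup_n U^\star_{\mu_n}(x_0)\le U^\star_\mu(x_0)$, which is the \emph{upper} bound (Hartogs), not the lower one you need. The atom issue you flag is a red herring; the real problem is the sign. A correct route: the normalized potentials $V_n:=U^\star_{\mu_n}-\max U^\star_{\mu_n}$ lie in the $L^1$-compact set $\{\phi\ \omega\text{-sh}:\max\phi=0\}$; along a subsequence realizing $\ell:=\liminf\max U^\star_{\mu_n}$, extract $V_n\to V$ in $L^1$ with $\max V\le 0$, and combine with $U^\star_{\mu_n}\to U^\star_\mu$ in $L^1$ to get $U^\star_\mu=V+\ell$, hence $\max U^\star_\mu\le\ell$.

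Second, the ``main obstacle'' you single out in the envelope step---showing $\tilde\nu(D)=0$---is in fact immediate, and you have overcomplicated it. Since $U_\nu$ is itself a competitor, $\Phi\ge U_\nu$ everywhere; together with the constraint $\Phi\le U_\nu$ on $\overline D$ this forces $\Phi=U_\nu$ on $\overline D$, and the same holds for $\Phi^\star$ because $U_\nu$ is upper semicontinuous. Hence on the open set $D$ one has $\ddc\Phi^\star=\ddc U_\nu$, and since $\nu(D)=0$ this gives $\tilde\nu|_D=0$ directly---no variational inequality or first-order perturbation is needed. Moreover $\max\Phi^\star\ge\max U_\nu=0$, so $\Phi^\star=U_{\tilde\nu}$; then $U_\nu\le U_{\tilde\nu}$ and Lemma~\ref{lem-compare-oI} yields $\oI_{\omega,D}(\tilde\nu)\le\oI_{\omega,D}(\nu)$, whence $\tilde\nu=\nu$ and $\Phi=U_\nu$.
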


For convenience, we shall abbreviate \(\mathcal{I}_{\omega,r} := \mathcal{I}_{\omega, \B(x,r)}\), since we only consider $D=\B(x,r)$ and  \(x\)  is fixed throughout this article.  Write $\oI_\omega:=\oI_{\omega,\varnothing}$.  By Lemma \ref{l:convex} above, $\mathcal{I}_{\omega,r}$ admits a \textbf{unique minimizer} \(\nu_{\omega,r} \in \mathcal{M}(X \setminus \B(x,r))\). Moreover, $U_{\nu_{\omega,r}}$ is continuous due to the smoothness of $\partial D$.

We have the follow monotone property related to $\oI_{\omega}$ and $\omega$-potential, which will be used very frequently later.

\begin{lemma}\label{lem-compare-oI}
If $\sigma,\eta$ are two probability measures on $X$ such that $ U_{\sigma}\leq U_\eta $ on $X$. Then 
$$   \oI_\omega (\eta)\leq \oI_\omega(\sigma).    $$
\end{lemma}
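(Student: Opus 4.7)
The plan is to expand the difference $\oI_\omega(\eta) - \oI_\omega(\sigma)$ directly using the relation $\mu = \omega + \ddc U_\mu$, and then collapse the result via the polarization identity for the Dirichlet bilinear form. Concretely, set $u := U_\eta$, $v := U_\sigma$, and $w := u - v$, so that the hypothesis reads $w \geq 0$ on $X$.

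First I would substitute $\dd\mu = \omega + \ddc U_\mu$ into the defining formula \eqref{functional I_r} (with $D = \varnothing$) to get the symmetric expression
\begin{equation*}
\oI_\omega(\mu) = -2\int_X U_\mu\,\omega - \int_X U_\mu\,\ddc U_\mu.
\end{equation*}
Next, using the Dirichlet-type pairing $E(f,g) := \int_X df \wedge \dc g$ (which is symmetric and satisfies $\int f\,\ddc g = -E(f,g)$ after integration by parts on the compact surface $X$), I would write $-\int U_\mu\,\ddc U_\mu = E(U_\mu, U_\mu)$. Subtracting the two functional values then gives
\begin{equation*}
\oI_\omega(\eta) - \oI_\omega(\sigma) = -2\int_X w\,\omega + E(u,u) - E(v,v).
\end{equation*}

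The key algebraic step is the polarization $E(u,u) - E(v,v) = E(u-v,\,u+v) = E(w,\,u+v)$. Integration by parts once more yields $E(w,u+v) = -\int_X w\,\ddc(u+v) = -\int_X w\,\dd\eta - \int_X w\,\dd\sigma + 2\int_X w\,\omega$. Substituting back, the terms $\pm 2\int_X w\,\omega$ cancel and we obtain the clean identity
\begin{equation*}
\oI_\omega(\eta) - \oI_\omega(\sigma) = -\int_X w\,\dd\eta - \int_X w\,\dd\sigma,
\end{equation*}
which is manifestly non-positive since $w \geq 0$ and $\eta,\sigma$ are positive measures. This gives $\oI_\omega(\eta) \leq \oI_\omega(\sigma)$, with equality iff $w$ vanishes $\eta$-a.e.\ and $\sigma$-a.e.

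The only subtle point—and what I expect to be the main technical obstacle—is justifying the two integration-by-parts steps, since $U_\eta$ and $U_\sigma$ are merely quasi-subharmonic (in particular $\omega$-subharmonic), so a priori they can be unbounded below and only have distributional Laplacian given by a measure. On a compact Riemann surface this is, however, standard: $\omega$-subharmonic functions lie in all $L^p$ spaces, their gradients lie in $L^2$, and the Dirichlet pairing $E(u,v)$ and the integrals $\int u\,\ddc v$ are well-defined and satisfy the integration-by-parts formula (see standard pluripotential theory on Riemann surfaces). One can also verify the identity first for smooth regularisations $u_\varepsilon, v_\varepsilon$ obtained by convolution in local charts and then pass to the limit using the $L^2$ convergence of gradients and weak convergence of $\ddc u_\varepsilon \to \ddc u$ on bounded test functions; the inequality $u_\varepsilon - v_\varepsilon \geq 0$ is preserved in the limit, so the final sign of the right-hand side is retained.
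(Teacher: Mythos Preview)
Your proof is correct and follows essentially the same approach as the paper's: both substitute $\dd\mu = \omega + \ddc U_\mu$ and then integrate by parts (Stokes) to reduce everything to integrals of $w = U_\eta - U_\sigma \geq 0$ against positive measures. The only presentational difference is that you organize the computation through the Dirichlet polarization $E(u,u)-E(v,v)=E(w,u+v)$ and thereby obtain the clean \emph{identity} $\oI_\omega(\eta)-\oI_\omega(\sigma) = -\int w\,\dd\eta - \int w\,\dd\sigma$, whereas the paper inserts the sign condition at two intermediate steps and arrives at an inequality rather than the exact formula. Your remark about justifying integration by parts for $\omega$-subharmonic functions is appropriate; the paper invokes Stokes without comment, implicitly relying on the same standard facts you cite.
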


\begin{proof}
 Using Stoke's formula several times, we have 
\begin{align*}
&\int U_\sigma \, \dd \sigma- \int U_\eta \,\dd\eta=\int U_\sigma \, \dd \sigma -\int U_\sigma \, \dd \eta +\int U_\sigma \, \dd \eta- \int U_\eta \,\dd\eta \\
&=\int U_\sigma \,\ddc( U_\sigma-U_\eta)+ \int (U_\sigma -U_\eta)\,\dd \eta\leq \int U_\sigma \,\ddc( U_\sigma-U_\eta)+0\\
&= \int \ddc U_\sigma \, ( U_\sigma -U_\eta)   =\int ( U_\sigma -U_\eta) \,\dd(\sigma-\omega)  \\
&=  \int ( U_\sigma -U_\eta) \,\dd\sigma -\int ( U_\sigma -U_\eta) \,\omega \leq 0  -\int ( U_\sigma -U_\eta) \,\omega.
\end{align*}
This gives the desired inequality by definition \eqref{functional I_r}.
\end{proof}

For a \textbf{negative} continuous function $u$ on $\partial D$, define the upper envelop
$$  \widehat U:=    \sup_\phi  \big\{  \phi \text{ is } \omega\text{-subharmonic}: \; \phi\leq 0 \text{ on } X,\; \phi \leq u \text{ on }  \partial D  \big\}.$$    

The proof of next lemma should be standard, but we cannot find an exact the same statement in literature. So we provide the details for convenience.

\begin{lemma}\label{lem-upenvop}
The function $\widehat U$ is a continuous $\omega$-subharmonic functions on $X$ satisfying
$$ \ddc \widehat U=-\omega   \,\text{ on }\, \{\widehat U\neq 0 \}\setminus \partial D   \quad\text{and}\quad  \widehat U=u  \,\text{ on }\, \partial D.         $$
\end{lemma}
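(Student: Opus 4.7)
The plan is to adapt the classical Perron envelope construction to the $\omega$-subharmonic setting, proceeding in three stages: first, show $\widehat U$ itself is $\omega$-subharmonic without any regularization; second, establish the boundary identity $\widehat U = u$ on $\partial D$ together with continuity, via barriers; third, prove the maximality equation $\ddc \widehat U + \omega = 0$ on the open set $\Omega := \{\widehat U \neq 0\} \setminus \partial D$.

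For the first stage I would invoke the standard fact that the upper-semicontinuous regularization $\widehat U^*$ of a supremum of an admissible family of $\omega$-subharmonic functions is again $\omega$-subharmonic. Since the two bounding data, $0$ on $X$ and $u$ on $\partial D$, are continuous, usc regularization preserves the inequalities $\widehat U^* \leq 0$ on $X$ and $\widehat U^* \leq u$ on $\partial D$, so $\widehat U^*$ is itself an admissible competitor, forcing $\widehat U^* = \widehat U$. For the second stage, at each $p \in \partial D$ and each $\epsilon > 0$ I would construct an explicit $\omega$-subharmonic competitor $\phi_{p,\epsilon}$ on $X$ with $\phi_{p,\epsilon}(p) \geq u(p) - \epsilon$, $\phi_{p,\epsilon} \leq u$ on $\partial D$, and $\phi_{p,\epsilon} \leq 0$ on $X$. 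The construction is local: take a smooth extension $\tilde u$ of $u$ into a neighborhood of $p$ in $X$, correct by a small multiple of the signed distance to $\partial D$ so that the $\omega$-subharmonicity is strict near $p$, and glue with a sufficiently negative global $\omega$-subharmonic function via a regularized $\max$. The estimate $\widehat U(p) \geq u(p) - \epsilon$ for every $\epsilon$, together with the trivial reverse bound coming from the definition, yields $\widehat U(p) = u(p)$, and continuity on $\partial D$ follows from the barrier sandwich combined with the usc identity $\widehat U = \widehat U^*$ from stage one.

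For the third stage I would argue by the usual balayage/bump contradiction. On any coordinate ball $B$ with $\bar B \subset \Omega$, if $\ddc \widehat U + \omega$ were not identically zero on $B$, I would solve the Dirichlet problem $\ddc v + \omega = 0$ on $B$ with boundary data $\widehat U|_{\partial B}$. The maximum principle forces $v \geq \widehat U$ on $\bar B$ with strict inequality at some interior point, and $v \leq \max_{\partial B} \widehat U < 0$ since $\bar B \subset \{\widehat U < 0\}$. Gluing $v$ on $B$ with $\widehat U$ outside produces an admissible $\omega$-subharmonic function strictly above $\widehat U$ at an interior point of $B$; the constraint $\leq u$ on $\partial D$ survives since $B \cap \partial D = \varnothing$, and $\leq 0$ survives by the bound on $v$ just mentioned. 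This contradicts the extremality of $\widehat U$. The main technical obstacle I anticipate is stage two: on a compact Riemann surface the Euclidean Poisson kernel is not directly available, and the barrier must be simultaneously $\omega$-subharmonic globally, pointwise $\leq 0$ on $X$, and nearly matching $u$ at the boundary point, which requires a careful local extension plus global $\max$-patching; smoothness of $\partial D$, simple connectedness of $D$, and continuity of $u$ are precisely the hypotheses making this succeed.
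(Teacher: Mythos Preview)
Your overall strategy---show $\widehat U = \widehat U^*$, then the maximality equation via balayage, then the boundary identity---parallels the paper's, and your Stage~3 is essentially the paper's Step~2. However, Stage~1 contains a genuine gap.

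You assert that because $u$ is continuous on $\partial D$, the regularization automatically satisfies $\widehat U^* \leq u$ on $\partial D$. This is not justified. For $p \in \partial D$ one has $\widehat U^*(p) = \limsup_{q \to p} \widehat U(q)$ with $q$ ranging over a full neighborhood of $p$ in $X$, not merely over $\partial D$. For $q \notin \partial D$ the only available bound is $\widehat U(q) \leq 0$, and since $u$ is strictly negative this does \emph{not} force $\widehat U^*(p) \leq u(p)$. What is actually needed is a continuous \emph{upper} barrier for $\widehat U$ near $\partial D$ that restricts to $u$ on $\partial D$. The paper supplies this globally: it solves $\ddc V = -\omega$ on $D$ and on $X \setminus \overline D$ with boundary data $V|_{\partial D} = u$; the maximum principle on each piece then gives $\phi \leq V$ for every competitor $\phi$, hence $\widehat U \leq V$, hence $\widehat U^* \leq V = u$ on $\partial D$ since $V$ is continuous. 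Your Stage~2 barriers are \emph{lower} barriers (admissible competitors from below) and cannot substitute for this upper control.

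A secondary gap: your continuity argument addresses only points of $\partial D$, but continuity of $\widehat U$ across the free boundary $\partial\{\widehat U = 0\} \setminus \partial D$ also requires justification. The paper (Step~4) handles this by recognizing that on $\{\widehat U \neq 0\} \cap D$ and on $\{\widehat U \neq 0\} \setminus \overline D$ the function $\widehat U$ solves the Dirichlet problem $\ddc \widehat U = -\omega$ with continuous boundary data ($u$ on $\partial D$, $0$ on the rest of the boundary), hence is continuous up to the closure of each piece.
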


\begin{proof}
\textbf{Step 1:} $\widehat U$ is $\omega$-subharmonic.
\smallskip

Clearly, $\ddc \widehat U\geq -\omega$. To prove $\widehat U$ is $\omega$-subharmonic, we need to show that $\widehat U$ is upper semi-continuous. Let $\widehat U^*$   be the upper semi-continuous regularization of $\widehat U$, which is $\omega$-subharmonic. In the following, we will prove $\widehat U=\widehat U^*$.

Let $V$ be the continuous function on $X$ satisfying
$$V=u \,\text{ on }\,  \partial D,  \quad \ddc V=-\omega  \,\text{ on }\, X\setminus \partial D.       $$
This $V$ is the unique solution of Dirichlet problem on the domains $D$ and $X\setminus \overline D$. The continuity of $V$ is guaranteed by the continuity of $u$. 

 For any $\omega$-subharmonic function $\phi$ such that $\phi\leq 0$ on $X$ and $\phi\leq u$ on $\partial D$, we have
$\phi \leq \widehat U\leq \widehat U^*$. Applying maximal modulus principle to the function $\phi - V$, which is subharmonic on both $D$ and $X\setminus \overline D$, we get 
$\phi \leq V$   on $X$.  
It follows that 
$ \widehat U \leq V$   on $X$, and hence
$$\widehat U^* \leq V  \,\text{ on }\, X   $$
because $V$ is continuous. In particular, $\widehat U^* \leq u$ on $\partial D$. Thus, $\widehat U^*$ itself is an $\omega$-subharmonic function satisfying $\widehat U^*\leq 0$ on $X$ and $\widehat U^*\leq u$ on $\partial D$. This gives
$$\widehat U^* \leq \widehat U   \,\text{ on }\, X .   $$
So we conclude that $\widehat U=\widehat U^*$, finishing the proof of Step 1.

\medskip
\noindent \textbf{Setp 2:} $ \ddc \widehat U=-\omega$ on $\{\widehat U\neq 0 \}\setminus \partial D$.
\smallskip

Take a point $y\in \{\widehat U\neq 0\}\setminus\partial D$. Since $\widehat U$ is upper semi-continuous by Step 1, we can take two small open balls $B_1,B_2$ and an $\vep>0$ such that $$x\in B_1\Subset B_2\Subset \{\widehat U\neq 0\}\setminus\partial D \quad\text{and}\quad  \widehat U\leq -2\vep \,\text{ on }\,B_2$$ 

Suppose for contradiction, $\ddc\widehat U\neq -\omega$ near $y$, which means $\ddc \widehat U+\omega \neq 0$ on any open neighborhood of $y$. 
After shrinking $B_1,B_2$, we may fix a smooth function $\varphi$   on $B_2$ such that
$$|\varphi|\leq \vep\,\text{ on }\,B_2 \quad\text{and}\quad \ddc \varphi =\omega.$$ Then $\widehat U + \varphi$ is subharmonic on $B_2$ and not harmonic on $B_1$.
By \cite[Prop. 9.1]{bedford-1982}, we can find a subharmonic function $\psi$ on $B_2$ such that 
$$\ddc \psi =0   \,\text{ on }\, B_1\quad\text{and}\quad   \psi=    \widehat U +\varphi  \,\text{ on }\,  B_2\setminus B_1.$$
Moreover,   maximal modulus principle gives $\sup_{B_2} \psi=\sup_{B_2\setminus B_1}\psi$, which implies $\psi\leq \sup_{B_2\setminus B_1} (\widehat U +\varphi)\leq -\vep$  on $B_1$.  Applying maximal modulus principle to   $\widehat U+\varphi-\psi$, we see that 
$\psi> \widehat U +\varphi \,\text{ on }\, B_1.$
Therefore, the function $\Psi$ defined as 
$$\Psi:=\psi-\varphi    \,\text{ on }\, B_1\quad\text{and}\quad   \Psi:=    \widehat U    \,\text{ on }\,  X\setminus B_1 $$
is an $\omega$-subharmonic function satisfying $\Psi\leq 0$ on $X$ and $\Psi\leq u$ on $\partial D$. This contradicts to the fact that $\widehat U$ is the maximal one among all these kind of functions.

\medskip
\noindent \textbf{Setp 3:} $\widehat U=u$ on $\partial D$.
\smallskip
The proof is similar as Step 2. Suppose $w$ is a point in $\partial D$ such that $\widehat U(w)<u(w)$. Then we can take an $\vep>0$ and two small open balls $B_1,B_2$ such that 
$$w\in B_1\Subset B_2 \quad\text{and}\quad  \widehat U\leq u-2\vep\,\text{ on }\, \partial D \cap B_2,\quad  \widehat U\leq -2\vep \,\text{ on }\, B_2.$$
By the same construction as in Step 2, we can find an $\omega$-subharmonic function $\Psi$  satisfying $\Psi\leq 0$ on $X$ and $\Psi\leq u$ on $\partial D$, which gives the contradiction.

\medskip
\noindent \textbf{Setp 4:} $\widehat U$ is continuous.
\smallskip
 
We first prove    the continuity on $\overline D$. Denote $D_1:=\{\widehat U\neq 0\}\cap D$.  Note that $\widehat U$ is the solution of the following Dirichlet problem
$$ \ddc \widehat U =-\omega \,\text{ on }\, D_1, \quad  \widehat U=u \,\text{ on }\, \partial D, \quad \widehat U=0 \,\text{ on }\, \partial D_1\setminus \partial D.    $$
The boundary data is continuous. So $\widehat U$ is continuous on $\overline D_1$ and hence it is continuous on $\overline D$. The proof of continuity on $X\setminus D$ is similar.
\end{proof}

\medskip
\section{Flat case} \label{sec-flat}

In this section, we will  consider   a simple situation, assuming that near $x$, the two metrics $\omega$ and $\omega_0$ are both flat, i.e., there exist an $r_0>0$ and a local coordinate $z$ such that, on $\B(x,r_0)$, 
\begin{equation}\label{eqn-flat}
\omega=\alpha \, i\dd z\wedge \dd \overline z, \quad   \omega_0= \beta \, i\dd z\wedge \dd \overline z
\end{equation}
for some $\alpha,\beta>0$. We may further assume  that $z=0$ at $x$ and $\beta=1/2$, in which case,  $z$ is an isometry from $\B(x,r_0)$ to $\D(0,r_0)$.

\begin{proposition}\label{prop-r0}
Under condition \eqref{eqn-flat}, we have $$\min\oI_{\omega,r}=\alpha^2 e^2 \pi ^2 r^4 \quad\text{for $r>0$ small enough}.$$
\end{proposition}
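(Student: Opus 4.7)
The plan is to construct the minimizer $\nu_{\omega,r}$ of $\oI_{\omega,r}$ explicitly in the flat chart, verify via Lemma~\ref{l:convex} that it is the unique minimizer, and then evaluate $\oI_{\omega,r}(\nu_{\omega,r})$ by direct integration. The construction is valid as soon as $r\sqrt{e}\leq r_0$, i.e.\ for all sufficiently small $r$.

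Setting $R := r\sqrt{e}$, I would define
\[
U(z) := \begin{cases} -\alpha\pi|z|^2 & \text{if } |z|\leq r,\\[1pt] 2\alpha\pi e r^2 \log(|z|/r) -\alpha\pi|z|^2 & \text{if } r\leq |z|\leq R,\\[1pt] 0 & \text{if } |z|\geq R, \end{cases}
\]
in the flat chart $\B(x,r_0)$, extended by zero on $X\setminus\B(x,r_0)$. The annular formula solves $\ddc U = -\omega$ locally (using $\ddc|z|^2 = \pi^{-1}\,i\,\dd z\wedge \dd\bar z$ and $\ddc\log|z|\equiv 0$ on $\B(x,r_0)\setminus\{0\}$); the coefficients and the free-boundary radius $R=r\sqrt{e}$ are the unique ones compatible with continuity at $|z|=r$, smooth-fit $\partial_{|z|}U(R^-)=0$ and the normalization $U(0)=0$. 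A direct check shows $U$ is continuous, non-positive (this last point reducing to the one-variable inequality $2e\log s - s^2\leq 0$ on $[1,\sqrt e]$) and $\omega$-subharmonic, with the only singular part of $\ddc U + \omega$ sitting on $\partial\B(x,r)$ and carrying positive total mass $2\alpha\pi e r^2$ coming from the radial-derivative jump there. Setting $\nu := \ddc U + \omega$, Stokes' theorem gives $\nu(X) = 1$, and $\nu = \omega$ on $X\setminus\B(x,R)$, so $\supp\nu\subset X\setminus\B(x,r)$ and $\nu\in\cM(X\setminus\B(x,r))$.

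Next I would identify $U$ with the upper envelope characterizing $\nu_{\omega,r}$ in Lemma~\ref{l:convex}. Since $U$ is itself an admissible competitor, it suffices to show $\phi\leq U$ on $X$ for every $\omega$-subharmonic $\phi$ with $\phi\leq 0$ globally and $\phi\leq U$ on $\overline{\B(x,r)}$. Splitting $X$ into the three regions $\overline{\B(x,r)}$, the annulus $\{r<|z|<R\}$, and $X\setminus\B(x,R)$: on the first $\phi\leq U$ holds by hypothesis; on the third $\phi\leq 0 = U$; and on the annulus $\phi-U$ is subharmonic (because $\ddc(\phi-U) = \ddc\phi+\omega\geq 0$) with boundary values $\leq 0$ on both bounding circles, so the maximum principle yields $\phi\leq U$ there as well. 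Hence $U$ is the upper envelope and by the uniqueness statement in Lemma~\ref{l:convex}, $\nu = \nu_{\omega,r}$. Finally, since $U\equiv 0$ on $X\setminus\B(x,R)$, one has $\int U\,\dd\nu = U(r)\cdot 2\alpha\pi e r^2 = -2\alpha^2\pi^2 e r^4$, while $\int U\,\omega = 4\pi\alpha\int_0^R U(\rho)\rho\,\dd\rho$ simplifies in polar coordinates to $-\alpha^2\pi^2 e(e-2)r^4$; summing with the signs prescribed by $\oI_{\omega,r}(\nu) = -\int U\,\omega - \int U\,\dd\nu$ produces $\oI_{\omega,r}(\nu) = \alpha^2\pi^2 e^2 r^4$. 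The only genuine obstacle in the argument is the maximality step for the upper envelope; once the smooth-fit radius $R=r\sqrt{e}$ is identified, everything else is routine computation.
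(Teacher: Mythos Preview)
Your construction of $U$, the verification that $\nu:=\ddc U+\omega\in\cM(X\setminus\B(x,r))$, and the final integrations are all correct and agree with the paper's formula~\eqref{solution-flat} and the computed value $\alpha^2 e^2\pi^2 r^4$. The gap is in the identification step. What your three-region maximum-principle argument establishes is
\[
U=\sup\big\{\phi\ \omega\text{-subharmonic}:\ \phi\leq 0\text{ on }X,\ \phi\leq U\text{ on }\overline{\B(x,r)}\big\},
\]
i.e.\ $U$ satisfies the envelope identity \emph{with $U$ itself as obstacle}. But Lemma~\ref{l:convex} only asserts that the minimizer's potential $U_{\nu_{\omega,r}}$ satisfies the same self-referential identity with $U_{\nu_{\omega,r}}$ as obstacle; it does not claim this identity has a unique solution among $\omega$-potentials of measures in $\cM(X\setminus\B(x,r))$. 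So you cannot yet conclude $U=U_{\nu_{\omega,r}}$, and invoking ``uniqueness'' is premature since you have not shown that $\nu$ minimizes. Concretely: to feed $\phi=U_{\nu_{\omega,r}}$ into your argument and deduce $U_{\nu_{\omega,r}}\leq U$, you would need $U_{\nu_{\omega,r}}\leq -\alpha\pi r^2$ on $\partial\B(x,r)$, which is not known a priori (nothing prevents $U_{\nu_{\omega,r}}$ from being non-radial and exceeding $-\alpha\pi r^2$ at some boundary points).

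The paper supplies exactly this missing piece via symmetry. It first quotes \cite{DGW-hole-event} for the fact that $\{U_{\nu_{\omega,r}}\neq 0\}$ shrinks to $x$ as $r\to 0$, hence lies inside the flat chart for small $r$. Uniqueness of the minimizer together with rotational invariance of the data then forces $U_{\nu_{\omega,r}}$ to be radial, so $U_{\nu_{\omega,r}}|_{\partial\B(x,r)}\equiv\gamma$ for some constant; since $\ddc U_{\nu_{\omega,r}}=-\omega$ on $\B(x,r)$ and $U_{\nu_{\omega,r}}(0)\leq 0$, one gets $\gamma\leq-\alpha\pi r^2$. With this in hand your annulus argument does yield $U_{\nu_{\omega,r}}\leq U$ globally, and Lemma~\ref{lem-compare-oI} then gives $\oI_{\omega,r}(\nu)\leq\oI_{\omega,r}(\nu_{\omega,r})$, forcing $\nu=\nu_{\omega,r}$ by uniqueness of the minimizer. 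In short, your proof becomes correct once the symmetry reduction is inserted before the envelope comparison.
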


\begin{proof}
Recall from \cite{DGW-hole-event} that $\nu_{\omega,r} := \omega|_{S_{\omega,r}} + \nu_{\bd r}$, where $S_{\omega,r} := \{ U_{\nu_{\omega,r}} = 0 \} \setminus \overline{\B(x,r)}$ and $\nu_{\bd r}$ is a non-vanishing positive measure on the boundary of \(\B(x,r)\).  Moreover, $S_{\omega,r}\cap  \overline{\B(x,r)}=\varnothing$. As $r\to 0$, the set $\{ U_{\nu_{\omega,r}} \neq 0 \}$ will shrinking to the point $x$. Thus, we may take small enough $r$ to assume that 
\begin{equation*}
\{ U_{\nu_{\omega,r}} \neq 0 \}\subset \B(x,r_0).
\end{equation*}
In which case, the metric $\omega$ and $\omega_0$ are flat on $\{ U_{\nu_{\omega,r}} \neq 0 \}$. By the uniqueness of $\nu_{\omega,r}$ and symmetry, $U_{\nu_{\omega,r}}$ is radial under the coordinate $z$ on $\B(x,r_0)$, i.e., it is a function of $|z|:=\dist(z,x)$.  In particular, $U_{\nu_{\omega,r}}$ is constant on $\partial\B(x,r)$, which we assume to be some negative constant $\gamma$.  By Lemma \ref{l:convex},
$$ U_{\nu_{\omega,r}}=    \sup_\phi  \big\{  \phi \text{ is } \omega\text{-subharmonic}: \; \phi\leq 0 \text{ on } X,\; \phi \leq U_{\nu_{\omega,r}} \text{ on } \overline \B(x,r)  \big\}.$$
This implies that $ U_{\nu_{\omega,r}}$ is uniquely determined by $\gamma$.

On the other hand, note that $\ddc U_{\nu_{\omega,r}}=-\omega$ because $\supp(\nu_{\omega,r})\subset X\setminus\B(x,r)$. Combining with Lemmas \ref{lem-compare-oI} and \ref{lem-upenvop},  we see that the maximal choice of $\gamma$ gives $U_{\nu_{\omega,r}}$, in which case, $U_{\nu_{\omega,r}}(x)=0$ by symmetry. By a direct computation, we obtain $\gamma=-\alpha\pi r^2$ and  (letting $|z|:=\dist(z,x)$)
\begin{equation}  \label{solution-flat}
U_{\nu_{\omega,r}} (z)=
\begin{cases}  - \alpha\pi |z|^2          &\quad\text{for}\quad  z\in \B(x,r)\\
 \alpha\pi \Big(2er^2\log{ |z|\over r}- |z|^2 \Big)   &\quad\text{for}\quad  z\in \B(x,\sqrt e \, r)\setminus \B(x,r)\\
 0 &\quad\text{for}\quad  z\in X\setminus \B(x,\sqrt e \, r).
\end{cases}
\end{equation}

Now we can compute the value of $\oI_{\omega,r}(\nu_{\omega,r})$ for the flat case.  Since $U_{\nu_{\omega,r}} =0$ on $X\setminus\B(x,\sqrt e \, r)$, we can work on $\B(x,\sqrt e \,r)$ using the coordinate $z$. With the help of polar coordinates, we have
\begin{align*}
&-\int_X U_{\nu_{\omega,r}}\, \omega=-\int_{\{ U_{\nu_{\omega,r}} \neq 0 \}} U_{\nu_{\omega,r}}\, \omega \\
&= \int_{\D(0,r)}  \alpha\pi |z|^2 \alpha \, i\dd z\wedge \dd \overline z +\int_ {\D(0,\sqrt e \,r)\setminus \D(0,r)} \alpha\pi \Big(|z|^2-2er^2\log{|z|\over r}\Big) \alpha \, i\dd z\wedge \dd \overline z\\
&=\alpha^2 \pi \int_0^{2\pi}\int_0^r t^2 2t \,\dd t \dd \theta +\alpha^2 \pi \int_0^{2\pi}\int_r^{\sqrt e \, r} \Big(t^2-2er^2\log{t\over r}\Big) 2t \,\dd t \dd \theta \\
&= \alpha^2 \pi ^2 r^4+ ( e^2-2e-1 )\alpha^2 \pi ^2 r^4=  (e^2-2e)\alpha^2 \pi ^2 r^4.
\end{align*}
For the other term $-\int_X U_{\nu_{\omega,r}}\, \dd\nu_{\omega,r}$, note that $U_{\nu_{\omega,r}}=0$ on $S_{\omega, r}$, and   the mass of $\nu_{\bd r}$ is 
$ 1-  \omega(S_{\omega, r})$, which equals  the area of $\D(0,\sqrt e \,r)$ under the metric $\omega$. Indeed, $\omega(S_{\omega, r})+\nu_{\bd r}(X)=1$. Using that $U_{\nu_{\omega,r}}=-\alpha \pi r^2$  on $\supp(\nu_{\bd r})=\partial\B(x,r)$, we have
$$ -\int_X U_{\nu_{\omega,r}}\, \dd\nu_{\omega,r}
=-\int_{X} U_{\nu_{\omega,r}}\, \nu_{\bd r}= \alpha \pi r^2\cdot 2\alpha \pi e r^2 =2e\alpha^2 \pi^2 r^4.$$
The proposition follows.
\end{proof}

\begin{remark}\rm
Under condition \eqref{eqn-flat}, the minimum value of $\oI_{\omega,r}$ is calculable, because of the following key fact:
\begin{equation}\label{symcase}
\omega \,\text{ is flat on }\,  \{ U_{\nu_{\omega,r}} \neq 0 \},
\end{equation}
so that the symmetric argument can be applied.
\end{remark}

\begin{remark}\rm
In fact, the function $U_{\nu_{\omega,r}}$ defined in \eqref{solution-flat} is exactly the upper envelop
$$  \widehat U:=    \sup_\phi  \big\{  \phi \text{ is } \omega\text{-subharmonic}: \; \phi\leq 0 \text{ on } X,\; \phi \leq u \text{ on }  \partial \B(x,r)  \big\},$$
where $u$ is the constant function $-\alpha\pi r^2$.
\end{remark}

\medskip
\section{Localization} \label{sec-loc}

We want to weaken the condition \eqref{eqn-flat} as $r_0$ is to large comparing with $r$. In this section, we only assume
\begin{equation}\label{eqn-flat-2}
\omega=\alpha \, i\dd z\wedge \dd \overline z   \,\text{ on }\, \B(x,2r), \quad   \omega_0= 1/2 \, i\dd z\wedge \dd \overline z \,\text{ on }\, \B(x,r_0)  
\end{equation}
for some $\alpha>0$, where  $z$ is a local coordinate on $\B(x,r_0)$ such that $z=0$ at $x$. 

\begin{proposition}\label{prop-2r}
Under condition $\eqref{eqn-flat-2}$, we have $$\min\oI_{\omega,r} =\alpha^2 e^2  \pi ^2 r^4.$$
\end{proposition}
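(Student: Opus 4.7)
The plan is to exhibit the minimizer $\nu_{\omega, r}$ explicitly as $\omega + \ddc U$, where $U$ is the very same function appearing in the flat case (formula~\eqref{solution-flat}), extended by $0$ outside $\B(x, \sqrt e\, r)$. Since $\sqrt e < 2$, the non-trivial part of $U$ sits inside $\B(x, 2r)$, where \eqref{eqn-flat-2} supplies the flat form $\omega = \alpha\, i\, \dd z \wedge \dd \bar z$. Once the identification $U = U_{\nu_{\omega, r}}$ is established, the value of $\oI_{\omega, r}(\nu_{\omega, r})$ is given by the same integration as at the end of the proof of Proposition~\ref{prop-r0}, because every integrand there is supported in $\overline{\B(x, \sqrt e\, r)} \subset \B(x, 2r)$, on which $\omega$ has the flat form.

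First I would verify that $U$ is $\omega$-subharmonic on $X$ with $\max_X U = 0$, and that $\nu := \omega + \ddc U$ is a probability measure in $\cM(X \setminus \B(x, r))$. This reduces to the same local pointwise calculations as in the proof of Proposition~\ref{prop-r0}: on $\B(x, \sqrt e\, r)$ one has $\ddc U = -\omega$ by flatness; outside $\overline{\B(x, \sqrt e\, r)}$, $U \equiv 0$ gives $\omega + \ddc U = \omega \geq 0$; the $C^1$ matching of $U$ at $|z| = \sqrt e\, r$ (already verified in the flat case) prevents any singular boundary mass; and Stokes' formula yields $\nu(X) = 1$ together with $\supp \nu \subset X \setminus \B(x, r)$.

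The heart of the argument is the identification $U = U_{\nu_{\omega, r}}$. By Lemma~\ref{l:convex} applied to $\nu_{\omega, r}$, the potential $U_{\nu_{\omega, r}}$ coincides with the upper envelope from Lemma~\ref{lem-upenvop} for the boundary value $u^* := U_{\nu_{\omega, r}}|_{\partial\B(x, r)}$. It therefore suffices to prove (a) $u^* = -\alpha\pi r^2$ (constant), and (b) the upper envelope with this constant boundary value equals $U$ globally. Part (b) is a direct maximum-principle argument splitting $X$ into $\overline{\B(x, r)}$, the annulus $\B(x, \sqrt e\, r) \setminus \overline{\B(x, r)}$, and $X \setminus \overline{\B(x, \sqrt e\, r)}$, and using flatness of $\omega$ on $\B(x, 2r)$ to verify the identity $\ddc U = -\omega$ on the set where $U$ is non-zero and the free-boundary matching at $|z| = \sqrt e\, r$. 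For part (a), the argument replacing the rotational-symmetry step of Proposition~\ref{prop-r0} uses the local flatness of $\omega$ on $\B(x, r)$: writing $U_{\nu_{\omega, r}} + \alpha\pi|z|^2 = h^*$ with $h^*$ harmonic on $\B(x, r)$, the constraints $U_{\nu_{\omega, r}} \leq 0$ on $X$ and $\max_X U_{\nu_{\omega, r}} = 0$ combine with Lemma~\ref{lem-compare-oI} and the minimality of $\nu_{\omega, r}$ to force $h^* \equiv 0$, whence $u^* = -\alpha\pi r^2$.

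The main obstacle is exactly part (a): in Proposition~\ref{prop-r0} the global rotational symmetry of $\omega$ immediately gave $u^*$ constant; here, without global symmetry, one must argue via the local harmonic structure on $\B(x, r)$ together with an energy/variational argument to rule out any non-constant deviation of $u^*$ from $-\alpha\pi r^2$, essentially showing that such a deviation would produce a harmonic $h^* \not\equiv 0$ whose extra Dirichlet contribution contradicts the minimality of $\nu_{\omega, r}$. Once $U = U_{\nu_{\omega, r}}$ is identified, the $\oI$-computation from the end of Proposition~\ref{prop-r0} applies verbatim to give $\min \oI_{\omega, r} = \alpha^2 e^2 \pi^2 r^4$.
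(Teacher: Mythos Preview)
Your strategy is to identify $\nu_{\omega,r}$ with $\sigma_r$ directly, but the argument for part (a) --- that $u^* := U_{\nu_{\omega,r}}|_{\partial\B(x,r)}$ is the constant $-\alpha\pi r^2$ --- is where the plan has a genuine gap. You write that ``the constraints $U_{\nu_{\omega, r}} \leq 0$ \ldots\ combine with Lemma~\ref{lem-compare-oI} and the minimality \ldots\ to force $h^* \equiv 0$,'' but this is a restatement of the goal, not an argument. The concrete obstruction is that condition~\eqref{eqn-flat-2} gives flatness of $\omega$ only on $\B(x,2r)$; nothing prevents $\{U_{\nu_{\omega,r}} \neq 0\}$ from extending well beyond $\B(x,2r)$, so one cannot rotate $U_{\nu_{\omega,r}}$ globally (rotation outside $\B(x,2r)$ does not preserve $\omega$-subharmonicity), and hence no direct symmetrization of $\nu_{\omega,r}$ is available. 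Your ``extra Dirichlet contribution'' idea would require producing, from a non-constant $h^*$, an admissible competitor in $\cM(X\setminus\B(x,r))$ with strictly smaller $\oI_{\omega,r}$; you do not indicate how, and the natural candidates (e.g.\ replacing $h^*$ by its mean value) destroy the matching at $\partial\B(x,r)$ and do not glue to a global $\omega$-subharmonic function.

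The paper's proof meets exactly this difficulty by working indirectly: it does \emph{not} try to identify $\nu_{\omega,r}$. It first introduces the class $\Omega_r := \{\mu : U_\mu = 0 \text{ on } X\setminus\B(x,2r)\}$, on which rotational averaging \emph{is} legitimate (everything lives in the flat region), and shows via convexity and lower semicontinuity of $\oI_{\omega,r}$ that $\sigma_r$ minimizes over $\Omega_r$ (Lemma~\ref{lem-miniOmega}). Then, assuming for contradiction that $\oI_{\omega,r}(\nu_{\omega,r}) < \alpha^2 e^2\pi^2 r^4$, it forms the convex combination $(1-\vep)\sigma_r + \vep\,\nu_{\omega,r}$ and takes an upper envelope of its potential to manufacture a competitor $\eta \in \Omega_r$ with $\oI_{\omega,r}(\eta) < \alpha^2 e^2\pi^2 r^4$, contradicting the first step. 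This ``localize via $\Omega_r$, then mix and envelope'' mechanism is precisely what replaces the missing global symmetry, and your sketch does not supply it.
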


Comparing with condition \eqref{eqn-flat}, the difficult of this case is that, we only know $\omega$ is flat on a quite small neighborhood of $x$, which does not implies \eqref{symcase} directly. So one cannot use the same argument as in Proposition \ref{prop-r0} to conclude the proof.  

\medskip
Let $\sigma_r$ be the minimizer of $\oI_{\omega,r}$ under  condition \eqref{eqn-flat}, in other words, its $\omega$-potential $U_{\sigma_r}$ is defined in \eqref{solution-flat} as follows (letting $|z|:=\dist(z,x)$):
\begin{equation}  \label{defn-Phir}
U_{\sigma_r} (z)=
\begin{cases}  - \alpha\pi |z|^2          &\quad\text{for}\quad  z\in \B(x,r)\\
 \alpha\pi \Big(2er^2\log{ |z|\over r}- |z|^2 \Big)   &\quad\text{for}\quad  z\in \B(x,\sqrt e \, r)\setminus \B(x,r)\\
 0 &\quad\text{for}\quad  z\in X\setminus \B(x,\sqrt e \, r).
\end{cases}
\end{equation}

In the proof of Proposition \ref{prop-r0}, we have computed that $$\oI_{\omega,r}(\sigma_r)=\alpha^2 e^2 \pi ^2 r^4,$$ and hence  $\min\oI_{\omega,r} \leq \alpha^2 e^2 \pi ^2 r^4$.
It  remains to  prove this is also the lower bound. Equivalently, we need to show that under condition \eqref{eqn-flat-2}, $\sigma_r$ is the minimizer of $\oI_{\omega,r}$ as well.

\smallskip

Consider the following proper subset of $\cM(X\setminus \B(x,r))$:
$$\Omega_r:=\big\{\mu\in\cM(X\setminus \B(x,r)):\, U_\mu=0 \text{ on } X\setminus \B(x,2r) \big\}.     $$

\begin{lemma}\label{lem-miniOmega}
Under condition $\eqref{eqn-flat-2}$, the minimum of $\oI_{\omega,r}$ over $\Omega_r$ appears at $\sigma_r$. In particular, $\oI_{\omega,r}(\mu)\geq \alpha^2 e^2 \pi ^2 r^4$ for all $\mu\in\Omega_r$.
\end{lemma}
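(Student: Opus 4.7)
The plan is to compare $\oI_{\omega,r}(\mu)$ and $\oI_{\omega,r}(\sigma_r)$ through a single energy identity. Setting $\Phi := U_\mu - U_{\sigma_r}$ and expanding the difference $dU_\mu \wedge \dc U_\mu - dU_{\sigma_r} \wedge \dc U_{\sigma_r}$ via $U_\mu = U_{\sigma_r} + \Phi$, two applications of Stokes' formula on the compact surface $X$, together with the symmetry $\int df \wedge \dc g = \int dg \wedge \dc f$ for bounded quasi-subharmonic $f,g$, should yield
\[
\oI_{\omega,r}(\mu) - \oI_{\omega,r}(\sigma_r) \;=\; \int_X d\Phi \wedge \dc \Phi \;-\; 2\int_X \Phi \, d\sigma_r.
\]
The first term is non-negative, so the problem reduces to proving $\int \Phi \, d\sigma_r \le 0$; equivalently $\int U_\mu\, d\sigma_r \le \int U_{\sigma_r}\, d\sigma_r$, whose right-hand side was computed to be $-2\alpha^2 \pi^2 e r^4$ in the proof of Proposition \ref{prop-r0}.

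To estimate $\int U_\mu \, d\sigma_r$, I would use the decomposition $\sigma_r = \nu_{\bd r} + \omega|_{X \setminus \B(x, \sqrt e\, r)}$ recalled from Proposition \ref{prop-r0}. The contribution over $X \setminus \B(x, \sqrt e\, r)$ is $\le 0$ because $U_\mu \le 0$. For the boundary piece, the local flatness \eqref{eqn-flat-2} together with the hypothesis $\mu \in \cM(X \setminus \B(x,r))$ forces $\ddc U_\mu = -\omega$ inside $\B(x,r)$, so that $h(z) := U_\mu(z) + \alpha\pi |z|^2$ is harmonic on $\B(x,r)$ (and subharmonic on the larger flat ball $\B(x,2r)$). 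Evaluating the mean value identity for $h$ at the centre $x$, for every $r'<r$,
\[
U_\mu(x) = h(x) = \frac{1}{2\pi r'}\int_{\partial \B(x,r')} U_\mu\,ds + \alpha\pi r'^2,
\]
so $\int_{\partial \B(x,r')} U_\mu\,ds = 2\pi r'(U_\mu(x) - \alpha\pi r'^2) \le -2\pi^2\alpha r'^3$ since $U_\mu(x)\le 0$. Letting $r'\to r^-$ and using that $h$ is subharmonic on $\B(x,2r)$, hence has an $L^1$ boundary trace on $\partial \B(x,r)$, the estimate extends to $\partial \B(x,r)$. Finally, by radial symmetry of the flat construction, $\nu_{\bd r}$ is the uniform measure $\alpha e r \cdot ds|_{\partial \B(x,r)}$ with total mass $2\alpha\pi e r^2$, hence
\[
\int U_\mu\, d\nu_{\bd r} = \alpha e r \int_{\partial \B(x,r)} U_\mu\,ds \le -2\alpha^2\pi^2 e r^4,
\]
which closes the required inequality.

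The main technical point is the passage $r' \to r^-$ in the mean value identity when $\mu$ has atoms on $\partial \B(x,r)$: in that case $U_\mu$ carries logarithmic singularities along the circle and $\int_{\partial \B(x,r)} U_\mu\,ds$ must be read in the upper-semicontinuous sense. The saving fact is that $h$ is genuinely subharmonic on the flat ball $\B(x,2r)$, which provides an $L^1_\loc$ boundary trace on $\partial \B(x,r)$ (the log singularities being integrable against arclength) and justifies interchanging limit and integral. Once this is in place, combining the three estimates gives $\oI_{\omega,r}(\mu) \ge \oI_{\omega,r}(\sigma_r) = \alpha^2 e^2 \pi^2 r^4$ for every $\mu \in \Omega_r$, which is the assertion of the lemma.
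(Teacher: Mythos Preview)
Your argument is correct and takes a genuinely different route from the paper. The paper proves the lemma by \emph{symmetrization}: for $\mu\in\Omega_r$ it forms the rotational averages $V_n(z)=\tfrac1n\sum_k U_\mu(e^{2\pi ik/n}z)$, uses the convexity of $\oI_{\omega,r}$ from Lemma~\ref{l:convex} to get $\oI_{\omega,r}(\mu_n)\le\oI_{\omega,r}(\mu)$, passes to the limit $\widetilde\mu$ via lower semicontinuity, and then appeals to the radial case already treated in Proposition~\ref{prop-r0}. Your approach bypasses all of this with the energy identity
\[
\oI_{\omega,r}(\mu)-\oI_{\omega,r}(\sigma_r)=\int_X d\Phi\wedge\dc\Phi-2\int_X\Phi\,d\sigma_r,\qquad \Phi=U_\mu-U_{\sigma_r},
\]
and then controls the cross term via the explicit structure $\sigma_r=\omega|_{X\setminus\B(x,\sqrt e\,r)}+\nu_{\bd r}$ together with the mean value property for the harmonic function $h=U_\mu+\alpha\pi|z|^2$ on $\B(x,r)$. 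The passage $r'\to r^-$ is handled, as you note, by the subharmonicity of $h$ on the larger flat ball $\B(x,2r)$: the circle mean $M_h(r')$ is convex in $\log r'$, hence continuous on $(0,2r)$, and equals $h(0)=U_\mu(x)$ for $r'<r$, so $M_h(r)=U_\mu(x)\le0$ as needed.

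Two remarks. First, the identity and the Stokes manipulations are only valid for $\mu$ of finite energy; but if $\oI_{\omega,r}(\mu)=+\infty$ the conclusion is trivial, so you may assume finite energy from the outset. Second, your argument never actually uses the hypothesis $U_\mu=0$ on $X\setminus\B(x,2r)$: the inequality $\int\Phi\,d\sigma_r\le0$ only needs $U_\mu\le0$ on $X$ and $\supp(\mu)\cap\B(x,r)=\varnothing$. So in fact you have proved Proposition~\ref{prop-2r} directly, for every $\mu\in\cM(X\setminus\B(x,r))$, without passing through $\Omega_r$ or the delicate construction with $\Psi_1,\Psi_A$ in the paper's proof of that proposition. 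This is a real simplification; the paper's route, by contrast, is more structural (it isolates the role of convexity and radial symmetry) but requires more machinery.
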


If we know the minimum of $\oI_{\omega,r}$ over $\Omega_r$ is unique, we can just apply the symmetry argument to conclude. However,
it is not clear that $\Omega_r$ is closed and convex under the weak topology of probability measures. One cannot  use the convexity of $\oI_{\omega,D}$ in Lemma \ref{l:convex} to get the uniqueness of the minimum over $\Omega_r$.

\begin{proof}[Proof of Lemma \ref{lem-miniOmega}]
For every $\mu\in \Omega_r$, we define the ``symmetric function" on $X$:
$$ \widetilde U_\mu (z) :={1\over 2\pi}\int_0^{2\pi} U_\mu(e^{i\theta} z)\,\dd \theta  \,\text{ for }\, z\in\B(x,2r), \quad   \widetilde U(z)=0 \,\text{ for }\, z\notin\B(x,2r).$$
Here, $e^{i\theta} z$ is well-defined  due to the flat assumption on $\omega_0$. It is not hard to see that $\widetilde U_\mu$ is still $\omega$-subharmonic and satisfying
$$\max \widetilde U_\mu=0 ,\quad  \ddc \widetilde U_\mu=-\omega \,\text{ on }\, \B(x,r).$$ 
Thus, it is the $\omega$-potential of the probability measure  $\widetilde\mu:=\ddc \widetilde U_\mu+\omega$, whose support is outside $\B(x,r)$. 

We want to bound $\oI_{\omega,r}(\widetilde\mu)$.  Define the sequence of functions $V_n$ as follows: $V_n=0$ on $X\setminus \B(x,2r)$, and 
$$ V_n(z) ={1\over n} \sum_{k=1}^n  U_\mu (e^{2ik\pi/n } z )        \,\text{ for }\, z\in\B(x,2r).$$  
Clearly, $V_n$ is $\omega$-subharmonic and 
$$\max  V_n=0 ,\quad  \ddc  V_n=-\omega \,\text{ on }\, \B(x,r).$$
So, $V_n$ is the $\omega$-potential of the probability measure $\mu_n:=\ddc V_n+\omega$, whose support is outside $\B(x,r)$ as well.  Moreover, we have 
$$ \mu_n= {1\over n} \sum_{k=1}^n \mu_{n,k}, $$
where $\mu_{n,k}:=\ddc U_\mu (e^{2ik\pi/n } z )  +\omega$. By definition,
\begin{align*}
\oI_{\omega,r}(\mu_{n,k}) &= -\int_X U_\mu (e^{2ik\pi/n } z ) \, \omega (z) - \int_X U_\mu (e^{2ik\pi/n } z ) \, \dd\mu_{n,k}(z) \\
&=  -\int_{\B(x,2r)} U_\mu (e^{2ik\pi/n } z ) \, \omega (z) - \int_{\B(x,2r)} U_\mu (e^{2ik\pi/n } z ) \, \dd\mu_{n,k}(z) \\
&= -\int_{\B(x,2r)} U_\mu    \, \omega   - \int_{\B(x,2r)} U_\mu   \, \dd\mu   =\oI_{\omega,r}(\mu).
\end{align*}
Recalling the convexity of $\oI_{\omega,r}$ from Lemma \ref{l:convex}, we get 
$$ \oI_{\omega,r}(\mu_n)  \leq {1\over n}\sum_{k=1}^n \oI_{\omega,r}(\mu_{n,k})= {1\over n}\sum_{k=1}^n \oI_{\omega,r}(\mu )  = \oI_{\omega,r}(\mu).  $$

On the other hand, note that $V_n$ converge to $\widetilde U_\mu$, which implies $\mu_n$ converge to $\widetilde\mu$ weakly. Using Lemma \ref{l:convex} again, we have 
$$ \liminf_{n\to\infty} \oI_{\omega,r}(\mu_n) \geq \oI_{\omega,r}(\widetilde \mu).     $$
Therefore, we conclude that 
$$ \oI_{\omega,r}(\widetilde \mu)\leq   \oI_{\omega,r}(\mu).     $$

Summing up,  we see that the minimum of $\oI_{\omega, r}$ over $\Omega_r$ attends in the following subset
$$\widetilde\Omega_r:=\big\{\mu\in\cM(X\setminus \B(x,r)):\, U_\mu=0 \text{ on } X\setminus \B(x,2r) ,\; U_\mu \,\text{ is radial on } \,  \B(x,2r) \big\}.     $$
In the proof of Proposition \ref{prop-r0}, we know that the minimum of $\oI_{\omega,r}$ over $\widetilde\Omega_r$ appears at $\sigma_r$, proving the lemma. 
\end{proof}

\smallskip

\begin{proof}[Proof of Proposition \ref{prop-2r}]
Suppose for contradiction,  $\min\oI_{\omega,r} =\oI_{\omega,r}(\nu_{\omega,r})<e^2 \alpha^2 \pi ^2 r^4$.  This $\nu_{\omega, r}$ may not lie in $\Omega_r$. We will start with this measure, constructing another probability measure $\eta\in\Omega_r$, whose functional value $\oI_{\omega, r}(\eta)$ is closed enough to $\oI_{\omega,r}(\nu_{\omega, r})$. Lastly, we apply Lemma \ref{lem-miniOmega} to get a contradiction.

\smallskip

Fix an $\vep>0$ small.
Consider the probability measure
$$\mu:=(1-\vep)\sigma_r +\vep \nu_{\omega,r}$$
and  its $\omega$-potential $U_\mu$ and  $\omega^*$-potential $U_\mu^*$.  Observe that $\mu\in\cM(X\setminus\B(x,r))$ and
$U_\mu^*=(1-\vep)U^*_{\sigma_r} +\vep U^*_{\nu_{\omega,r}}$. Using the continuity of $U^*_{\sigma_r}$ and $U^*_{\nu_{\omega,r}}$, we see that 
$$\min_{\partial \B(x,r)} U^*_{\mu}\geq (1-\vep) \min_{\partial \B(x,r)}U^*_{\sigma_r}+\vep\min_{\partial \B(x,r)}U^*_{\nu_{\omega,r}}$$
and 
$$ \max_X U^*_{\mu}\leq (1-\vep)\max_X   U^*_{\sigma_r}+\vep\max_X U^*_{\nu_{\omega,r}}. $$
Recall that $U_{\sigma_r}=-\alpha\pi r^2$ on $\partial\B(x,r)$.  By \eqref{diff-potential},
\begin{align*}
\min_{\partial \B(x,r)} U_{\mu} & \geq (1-\vep) \min_{\partial \B(x,r)}U^*_{\sigma_r}+\vep\min_{\partial \B(x,r)}U^*_{\nu_{\omega,r}} - (1-\vep)\max_X   U^*_{\sigma_r}-\vep\max_X U^*_{\nu_{\omega,r}} \\
& =(1-\vep)(\min_{\partial \B(x,r)}U^*_{\sigma_r}-\max_X   U^*_{\sigma_r})+\vep( \min_{\partial \B(x,r)}U^*_{\nu_{\omega,r}}-\max_X U^*_{\nu_{\omega,r}})\\
&=(1-\vep)\min_{\partial \B(x,r)}U_{\sigma_r}+\vep\min_{\partial \B(x,r)}U_{\nu_{\omega,r}}  >  -\alpha\pi r^2-A\vep,
\end{align*}
where $A:= -\min_{\partial \B(x,r)}U_{\nu_{\omega,r}}>0$.

\smallskip

Now consider the following two upper envelops:
$$ \Psi_r:=    \sup_\phi  \big\{  \phi \text{ is } \omega\text{-subharmonic}: \; \phi\leq 0 \text{ on } X,\; \phi \leq U_\mu \text{ on } \partial   \B(x,r)  \big\}$$ 
and
$$ \Psi_A:=    \sup_\phi  \big\{  \phi \text{ is } \omega\text{-subharmonic}: \; \phi\leq 0 \text{ on } X,\; \phi \leq  U_{\sigma_r}-A\vep \text{ on } \partial  \B(x,r)  \big\}.          $$
By Lemma \ref{lem-upenvop}, they are continuous $\omega$-subharmonic function and 
\begin{equation}\label{psirpsiA}
\Psi_r\geq U_\mu,\quad \Psi_r\geq \Psi_A.
\end{equation}
Moreover,  since $U_\mu,U_{\sigma_r}-A\vep$ themselves are  $\omega$-subharmonic functions, by  maximal modulus principle, we have on $\overline \B(x,r)$,
   $$\Psi_r=U_\mu \quad\text{and}\quad \Psi_A=U_{\sigma_r}-A\vep .$$ 
   
For $\vep$ small enough,  a direct computation (recalling condition \eqref{eqn-flat-2}) shows that $\Psi_A=0$ outside a small neighborhood of $\overline \B(x,\sqrt e\, r)$. In which case, $\Psi_r=0$ on $X\setminus \B(x,2r)$ by \eqref{psirpsiA}, and thus, $\Psi_r$ is the $\omega$-potential of the probability measure $\eta:=\ddc \Psi_r+\omega$. Lemma \ref{lem-compare-oI} gives
$$  \oI_{\omega,r}( \eta )\leq  \oI_{\omega,r}( \mu ). $$
Observe that $\eta\in \Omega_r$.

On the other hand,   Lemma \ref{l:convex}  states  that the functional $\oI_{\omega,r}$ is strictly convex on the set $\{\oI_{\omega,r}\neq +\infty\}$, yielding
$$\oI_{\omega,r}( \mu )\leq (1-\vep)\oI_{\omega,r} (\sigma_r)+\vep\oI_{\omega,r}(\nu_{\omega,r}) <\alpha^2e^2\pi^2r^4. $$

  Summing up, we have constructed a probability measure $\eta$ in $\Omega_r$ such that $\oI_{\omega,r}(\eta)<\alpha^2e^2\pi^2r^4$. This contradicts to Lemma \ref{lem-miniOmega}.
\end{proof}

\medskip
\section{Perturb  line bundle metric} \label{sec-bundle}

In this section, we will relax the condition \eqref{eqn-flat-2} further, only assuming $\omega_0$ to be flat. In other words, there exist an $r_0>0$ and a local coordinate $z$ such that, on $\B(x,r_0)$,
\begin{equation}\label{eqn-flat-3}
 z=0 \,\text{ at }\, x \quad\text{and}\quad  \omega_0=1/2 \, i\dd z\wedge \dd \overline z.
\end{equation}

Since $\omega$ is smooth, we have near $x$, 
$$\omega(z)= (1+O(|z|) \alpha \,i \dd z\wedge \dd \overline z      $$
for some $\alpha>0$.
So, there exists a constant $\rho>0$ such that 
\begin{equation}\label{ineq-omega-z}
(1-\rho |z|)\alpha   \,i \dd z\wedge \dd \overline z   \leq \omega \leq (1+\rho |z|) \alpha \,i \dd z\wedge \dd \overline z \quad\text{on}\quad \B(x,r_0).
\end{equation}

\begin{lemma}\label{lem-lower bound}
Under condition \eqref{eqn-flat-3}, we have $$\min\oI_{\omega,r} \leq \alpha^2e^2 \pi ^2 r^4 +O(r^5).$$
\end{lemma}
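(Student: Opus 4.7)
The strategy is to construct an explicit test probability measure $\mu_r\in\cM(X\setminus\B(x,r))$ with $\oI_{\omega,r}(\mu_r)\leq\alpha^2 e^2\pi^2 r^4+O(r^5)$, mimicking the proof of Proposition~\ref{prop-r0} while accounting for the non-flatness of $\omega$. The natural candidate is the Monge--Amp\`ere measure associated to an upper envelope of the type studied in Lemma~\ref{lem-upenvop}.

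Specifically, I would pick a constant $c_r=-\alpha\pi r^2+O(r^4)$---the small correction chosen, if necessary, so that $\Psi_r$ stays strictly negative on the interior of $\B(x,r)$---and set
\[
\Psi_r:=\sup\bigl\{\phi\text{ is }\omega\text{-subharmonic}:\ \phi\leq 0\text{ on }X,\ \phi\leq c_r\text{ on }\partial\B(x,r)\bigr\}.
\]
By Lemma~\ref{lem-upenvop}, $\Psi_r$ is continuous, $\omega$-subharmonic, with $\ddc\Psi_r=-\omega$ on $\{\Psi_r\neq 0\}\setminus\partial\B(x,r)$ and $\Psi_r=c_r$ on $\partial\B(x,r)$. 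The probability measure $\mu_r:=\ddc\Psi_r+\omega$ vanishes on $\B(x,r)$ (the Dirichlet problem solved there with strict negativity) and on $\{\Psi_r\neq 0\}\setminus\overline{\B(x,r)}$, so $\mu_r\in\cM(X\setminus\B(x,r))$; its singular part on $\partial\B(x,r)$ has mass $\omega(E_r)$ where $E_r:=\{\Psi_r\neq 0\}$, and since $\Psi_r\equiv 0$ far from $x$ one has $U_{\mu_r}=\Psi_r$. Decomposing $\mu_r$ exactly as in Proposition~\ref{prop-r0} gives
\[
\oI_{\omega,r}(\mu_r)=-\int_{E_r}\Psi_r\,\omega\;-\;c_r\,\omega(E_r).
\]

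The key technical step is a perturbative comparison of $\Psi_r$ with the flat-case potential $U_{\sigma_r}$ of~(\ref{defn-Phir}): using~(\ref{ineq-omega-z}) to treat the deviation $\omega-\alpha\,i\,dz\wedge d\bar z=O(|z|)\,\omega$ as a small perturbation of the flat free-boundary problem solved by $U_{\sigma_r}$, one expects uniform estimates $|\Psi_r-U_{\sigma_r}|=O(r^3)$ and $E_r\subset\B(x,\sqrt{e}\,r+O(r^3))$. Substituting these into the two integrals---and using $|U_{\sigma_r}|=O(r^2)$ with $\omega$-area of the support also $O(r^2)$---the non-flatness contributes only $O(r^5)$ to each term. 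Reproducing the flat computation from the proof of Proposition~\ref{prop-r0} then yields $(e^2-2e)\alpha^2\pi^2 r^4+O(r^5)$ and $2e\alpha^2\pi^2 r^4+O(r^5)$ respectively, which sum to $\alpha^2 e^2\pi^2 r^4+O(r^5)$.

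The principal obstacle is the uniform approximation $|\Psi_r-U_{\sigma_r}|=O(r^3)$, including the corresponding control on how much the free boundary $\partial E_r$ shifts under the $O(|z|)$ perturbation of $\omega$. A natural route, in the spirit of Step~2 of the proof of Proposition~\ref{prop-2r}, is to sandwich $\Psi_r$ between explicit radial flat envelopes with parameters $\alpha(1\pm\rho\sqrt{e}\,r)$---appropriately rescaled to be genuinely $\omega$-subharmonic, as in the passage from $U_{\sigma_r}$ to $(1-\rho\sqrt{e}\,r)U_{\sigma_r}$ that was the workhorse of Lemma~\ref{lem-miniOmega}---and to combine the resulting pointwise sandwich of $\Psi_r$ with the direct potential-theoretic identity above.
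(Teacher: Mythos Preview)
Your strategy---build an upper envelope $\Psi_r$ with constant boundary data on $\partial\B(x,r)$, set $\mu_r=\ddc\Psi_r+\omega$, and compute $\oI_{\omega,r}(\mu_r)$---is the same starting point the paper uses (its $\Psi_1$, with boundary data $(1+\vep)U_{\sigma_r}|_{\partial\B(x,r)}=-(1+\vep)\alpha\pi r^2$). Your decomposition $\oI_{\omega,r}(\mu_r)=-\int_{E_r}\Psi_r\,\omega-c_r\,\omega(E_r)$ is correct. The divergence is in how to extract the $O(r^5)$ estimate from this.

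You propose a pointwise perturbation estimate $|\Psi_r-U_{\sigma_r}|=O(r^3)$ together with free-boundary control $E_r\subset\B(x,\sqrt{e}\,r+O(r^3))$, and you correctly flag this as the principal obstacle. But your suggested resolution---sandwiching $\Psi_r$ between rescaled flat envelopes---is only sketched, and the hard half of the sandwich (the \emph{upper} barrier, needed to confine $E_r$) is not constructed: a function of the form $(1-c r)U_{\sigma_r}$ is still $\omega$-\emph{sub}harmonic on its support, so it competes in the envelope and gives a lower bound on $\Psi_r$, not an upper one. (Also, Lemma~\ref{lem-miniOmega} is a symmetrization argument, not a rescaling; it does not supply the barrier you allude to.) Without the upper barrier, neither the location of $\partial E_r$ nor the value of $\omega(E_r)$ is controlled, and your second term $-c_r\,\omega(E_r)$ cannot be estimated.

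The paper sidesteps this entirely. Rather than analyze the envelope $\Psi_1$ or its free boundary, it writes down an \emph{explicit} radial $\omega$-subharmonic function $\Psi_2$ (built from the flat minorant $\omega_1=(1-\vep)\alpha\,i\,dz\wedge d\bar z$, with a small vertical shift so that $\Psi_2\leq\Psi_1$ globally) and invokes the monotonicity Lemma~\ref{lem-compare-oI}: since $U_{\sigma_2}=\Psi_2\leq\Psi_1=U_{\sigma_1}$, one gets $\oI_{\omega,r}(\sigma_1)\leq\oI_\omega(\sigma_2)$ directly, and $\oI_\omega(\sigma_2)$ is computable in closed form because $\Psi_2$ is radial and explicit. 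No information about $\Psi_1$ beyond the inequality $\Psi_2\leq\Psi_1$ is ever needed. This is the step your sketch is missing: replace the perturbation analysis by a one-sided comparison at the level of potentials, and let Lemma~\ref{lem-compare-oI} transfer it to the functional.
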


\begin{proof}
We put $\vep:=2\rho r$ to simplify notation. Immediately from \eqref{ineq-omega-z}, we get
\begin{equation}\label{ineq-omega}
(1-\vep)\alpha   \,i \dd z\wedge \dd \overline z   \leq \omega \leq (1+\vep) \alpha \,i \dd z\wedge \dd \overline z  \quad\text{on}\quad \B(x,2r).
\end{equation}

On $\B(x,r_0)$, we define two local K\"ahler form   
$$\omega_1:=(1-\vep)\alpha \,i \dd z\wedge \dd \overline z  \quad\text{and}\quad \omega_2:=(1+\vep)\alpha \,i \dd z\wedge \dd \overline z .$$

Consider the upper envelop
$$ \Psi_1:=  \sup_\phi \big\{  \phi \text{ is } \omega\text{-subharmonic}: \; \phi\leq 0  \text{ on } X,\; \phi\leq (1+\vep)U_{\sigma_r} \text{ on } \partial\B(x,r)  \big\},$$
where $U_{\sigma_r}$ is defined in \eqref{defn-Phir}.  Let $$\sigma_1:=\ddc\Psi_1+\omega.$$

\medskip

\noindent \textbf{Claim:} $\ddc \Psi_1=-\omega$ on $\B(x,r)$. In particular, $\sigma_1\in \cM(X\setminus\B(x,r))$.

\proof[Proof of Claim] 
By Lemma \ref{lem-upenvop}, it is enough to show $\Psi_1\leq (1+\vep)U_{\sigma_r}$ on $\B(x,r)$, where the second function has only one zero in $\B(x,r)$.  From \eqref{defn-Phir}, we have $$\ddc (1+\vep)U_{\sigma_r}=-(1+\vep)\alpha \,i \dd z\wedge \dd \overline z=-\omega_2 \quad\text{on} \quad \B(x,r),$$
which gives
$$\ddc \big(\Psi_1- (1+\vep)U_{\sigma_r}\big)\geq -\omega +\omega_2 \geq 0  \quad\text{on}\quad \B(x,r).$$
Lemma \ref{lem-upenvop} gives $\Psi_1= (1+\vep)U_{\sigma_r}$ on $\partial\B(x,r)$.
By maximal modulus principle, 
$$\Psi_1\leq  (1+\vep)U_{\sigma_r} \quad\text{on}\quad \B(x,r).$$
This proves the claim.   
\endproof

Now consider the function (letting $|z|:=\dist(z,x)$)
$$
\Psi_2 (z)=
\begin{cases}   \alpha\pi (-2\vep r^2-|z|^2+\vep|z|^2  )         &\quad\text{for}\quad  z\in \B(x,r)\\
 \alpha\pi \Big(2(1-\vep)R^2\log{|z|\over r} -2\vep r^2 -|z|^2+\vep|z|^2\Big)   &\quad\text{for}\quad  z\in \B(x,R)\setminus \B(x,r)\\
 0 &\quad\text{for}\quad  z\in X\setminus \B(x,R),
\end{cases}
$$
where $R>r$ is determined by the equation
\begin{equation}\label{equa-R}
{R^2\over r^2} \Big(\log {R\over r}-{1\over 2}\Big)={\vep\over 1-\vep}   .
\end{equation}

It is not hard to check that $\Psi_2$ is continuous. On  $\B(x,R)\setminus \partial\B(x,r)$, we have $$\ddc \Psi_2= -\alpha (1-\vep)\,i \dd z\wedge \dd \overline z =-\omega_1.$$
An easy computation of the derivative with respect to $|z|$  gives
$$\ddc \Psi_2 > 0 \,\text{ on }\,  |z|=r  \quad\text{and}\quad \ddc \Psi_2 = 0 \,\text{ on }\,  |z|=R.$$ 
In particular, $\Psi_2$ is an $\omega$-subharmonic function on $X$, and it is the $\omega$-potential of the probability measure $$\sigma_2:=\ddc\Psi_2+\omega.$$  Furthermore, from the definition of $\Psi_1$ and that $\Psi_2=-\alpha\pi(1+\vep)r^2=(1+\vep)U_{\sigma_r}$ on $\partial\B(x,r)$, we see that  $\Psi_2\leq \Psi_1$ on $X$.   Applying Lemma \ref{lem-compare-oI}, we get
$$\oI_{\omega,r}(\sigma_1 )\leq  \oI_{\omega}(\sigma_2 ).$$
Note that $\supp(\sigma_2)$ may not be outside $\B(x,r)$.

\smallskip
To prove the lemma, it suffices to show  $ \oI_\omega(\sigma_2)\leq \alpha^2 e^2  \pi^2 r^4+O(r^5)$ since $\min  \oI_{\omega,r}\leq\oI_{\omega,r}(\sigma_1 )$.
In the following, we will estimate $\oI_{\omega,r}(\sigma_2 )$.
By definition \eqref{functional I_r} and \eqref{ineq-omega},
\begin{align*}
\oI_{\omega}(\sigma_2)  &= -\int_X \Psi_2 \, \omega - \int_X \Psi_2 \, \dd\sigma_2 = -\int_{\B(x,R)} \Psi_2 \, \omega - \int_{\B(x,R)} \Psi_2 \, \dd\sigma_2 \\
&\leq -\int_{\B(x,R)} \Psi_2 \,\omega_2-\int_{\B(x,R)} \Psi_2 \, (\ddc\Psi_2+\omega_2) \\
&=-\Big(1+{2\vep\over 1+\vep}\Big)\int_{\B(x,R)} \Psi_2 \,\omega_2 -\int_{\B(x,R)} \Psi_2 \, (\ddc\Psi_2+\omega_1).
\end{align*}
We  compute the exact value of the two integrals in coordinate $z$ as follows:
\begin{align*}
&-\int_{\B(x,R)} \Psi_2 \,\omega_2=-\int_{\B(x,r)} \Psi_2 \,\omega_2 -\int_{\B(x,R)\setminus \B(x,r)} \Psi_2 \,\omega_2 \\
&= \alpha\pi \int_{|z|<r} (2\vep r^2+|z|^2-\vep|z|^2)  (1+\vep)\alpha \,i \dd z\wedge \dd \overline z +  \\
   & \qquad \alpha\pi\int_{r<|z|<R} \Big(2(\vep-1)R^2\log{|z|\over r} +2\vep r^2 +|z|^2-\vep|z|^2\Big) (1+\vep)\alpha \,i \dd z\wedge \dd \overline z  \\
&=\alpha^2\pi(1+\vep) \int_0^{2\pi}\int_0^r (2\vep r^2+t^2-\vep t^2 )2t \,\dd t\dd\theta+  \\   
&\qquad \alpha^2\pi(1+\vep)\int_0^{2\pi}\int_r^R  \Big(2(\vep-1)R^2\log{t\over r} +2\vep r^2 +t^2-\vep t^2\Big)  2t \,\dd t\dd \theta        \\
&=\alpha^2\pi(1+\vep) \cdot 2\pi(1-\vep)(R^4/2-r^2R^2),
\end{align*}
where we have substituted \eqref{equa-R} to simplify.

For the measure $\ddc\Psi_2+\omega_1$ on $\B(x,R)$, it only has mass on $\partial \B(x,r)$, and the mass is equal to the area of $\omega_1(\B(x,R))$  due to Stoke's formula.   Hence
$$-\int_{\B(x,R)} \Psi_2 \, (\ddc\Psi_2+\omega_1)= \alpha \pi( 1+\vep )r^2\cdot 2\pi \alpha(1-\vep) R^2.  $$
Combining all the estimates above, yields
$$ \oI_\omega(\sigma_2)\leq \alpha^2\pi^2 (1+\vep)(1-\vep)R^4 .  $$

Recall that $\vep=2\rho r$ and from \eqref{equa-R}, we see that $R=\sqrt e \,r +O(r^2)$ as $r\to 0$. Therefore,
$$ \oI_\omega(\sigma_2)\leq \alpha^2e^2 \pi^2 r^4+O(r^5).$$
This completes the proof of the lemma.
\end{proof}

We also have the following lower bound.

\begin{lemma}
Under condition \eqref{eqn-flat-3}, we have $$\min\oI_{\omega,r} \geq \alpha^2 e^2  \pi ^2 r^4 -O(r^5).$$
\end{lemma}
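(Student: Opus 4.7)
The plan is to mirror the proof of Lemma~\ref{lem-lower bound} with the direction of perturbation reversed, constructing an explicit $\omega$-subharmonic function that dominates $U_{\nu_{\omega,r}}$ pointwise and applying Lemma~\ref{lem-compare-oI} in the opposite direction. Set $\vep := 2\rho r$ so that $\omega_1 = (1-\vep)\alpha\,i\dd z\wedge\dd\bar z$ and $\omega_2 = (1+\vep)\alpha\,i\dd z\wedge\dd\bar z$ satisfy $\omega_1 \leq \omega \leq \omega_2$ on $\B(x,2r)$. Write $\nu := \nu_{\omega,r}$ and $U := U_\nu$. The goal is to exhibit a probability measure $\widehat\sigma$ on $X$ (not necessarily in $\cM(X\setminus\B(x,r))$) whose $\omega$-potential $\widehat\Psi$ satisfies $\widehat\Psi \geq U$ on $X$ and $\oI_\omega(\widehat\sigma) \geq \alpha^2 e^2\pi^2 r^4 - O(r^5)$; Lemma~\ref{lem-compare-oI} then yields $\oI_{\omega,r}(\nu) \geq \oI_\omega(\widehat\sigma)$.

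\textbf{Step 1.} First verify that $(1-\vep)U_{\sigma_r}$ is $\omega$-subharmonic on $X$: on $\B(x,\sqrt e\,r)$ (which contains its support for small $r$), $\ddc[(1-\vep)U_{\sigma_r}] = -(1-\vep)\alpha\,i\dd z\wedge\dd\bar z = -\omega_1 \geq -\omega$, and outside it is identically zero. This makes $(1-\vep)U_{\sigma_r}$ a legitimate competitor in the variational characterization of Lemma~\ref{l:convex}.

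\textbf{Step 2.} Establish the pointwise bound $U \leq (1-\vep)U_{\sigma_r} + \delta_r$ on $\overline{\B(x,r)}$ for some $\delta_r = O(r^3)$. On $\B(x,r)$ we have $\ddc U = -\omega \leq -(1-\vep)\alpha\,i\dd z\wedge\dd\bar z = \ddc[(1-\vep)U_{\sigma_r}]$, so $U - (1-\vep)U_{\sigma_r}$ is superharmonic on $\B(x,r)$ and attains its maximum on $\partial\B(x,r)$. Controlling this maximum is the technical heart: one combines the Stokes' flux identity
$$
\int_{\partial\B(x,r)} \dc U = -\omega(\B(x,r)) = -\alpha\pi r^2 + O(r^3),
$$
the normalization $\max_X U = 0$, and the upper bound $\oI_{\omega,r}(\nu) \leq \alpha^2 e^2\pi^2 r^4 + O(r^5)$ from Lemma~\ref{lem-lower bound} to argue that $\max_{\partial\B(x,r)}\bigl(U - (1-\vep)U_{\sigma_r}\bigr) = O(r^3)$.

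\textbf{Step 3.} Following the template of $\Psi_2$ in Lemma~\ref{lem-lower bound} but with perturbations reversed, construct the explicit candidate
$$
\widehat\Psi(z) =
\begin{cases}
\alpha\pi\bigl(-(1+\vep)|z|^2 + 2\vep r^2\bigr) & |z| \leq r,\\
\alpha\pi\bigl(2(1+\vep)R_-^{2}\log(|z|/r) - (1+\vep)|z|^2 + 2\vep r^2\bigr) & r < |z| \leq R_-,\\
0 & |z| > R_-,
\end{cases}
$$
where $R_- > r$ is determined by the matching condition $(R_-^2/r^2)\bigl(\log(R_-/r) - 1/2\bigr) = -\vep/(1+\vep) + O(r)$, yielding $R_- = \sqrt e\,r - O(r^2)$. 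Check continuity and show that the corresponding measure $\widehat\sigma := \ddc\widehat\Psi + \omega$ is a positive probability measure on $X$ (possibly with support in $\B(x,r)$), using $\omega \leq \omega_2$. The domination $\widehat\Psi \geq U$ is then verified: on $\overline{\B(x,r)}$ it follows from Step~2 together with $\widehat\Psi \geq (1-\vep)U_{\sigma_r} + O(r^3)$ by direct comparison of the explicit formulas; on $X\setminus\B(x,r)$ one applies the maximum principle to $\widehat\Psi - U$, which is $\omega$-subharmonic minus $\omega$-subharmonic with controlled boundary values.

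\textbf{Step 4.} Finally compute $\oI_\omega(\widehat\sigma)$ directly in the coordinate $z$, splitting the integrals $-\int\widehat\Psi\,\omega$ and $-\int\widehat\Psi\,\dd\widehat\sigma$ over $\B(x,r)$ and $\B(x,R_-)\setminus\B(x,r)$, using $\omega \geq (1-\vep)\omega^{\rm flat}$ to get the matching lower estimate (the computation is the analogue of the one in Lemma~\ref{lem-lower bound}, but with inequalities reversed). Substituting $\vep = 2\rho r$ and $R_- = \sqrt e\,r + O(r^2)$ gives $\oI_\omega(\widehat\sigma) \geq \alpha^2 e^2\pi^2 r^4 - O(r^5)$, which finishes the proof by Lemma~\ref{lem-compare-oI}. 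The main obstacle will be Step~2, since the boundary values of $U$ on $\partial\B(x,r)$ are not a priori constant or explicitly known, and the pointwise $O(r^3)$ control must be extracted from integral identities rather than from symmetry.
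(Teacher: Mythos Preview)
Your proposal has two genuine gaps, and the second one is fatal to the approach as written.

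\textbf{The Step~3 construction fails.} On $\B(x,r)$ your explicit function satisfies $\ddc\widehat\Psi = -(1+\vep)\alpha\,i\dd z\wedge\dd\bar z = -\omega_2$. Since $\omega \leq \omega_2$ on $\B(x,2r)$, you get $\ddc\widehat\Psi + \omega = \omega - \omega_2 \leq 0$ there; hence $\widehat\sigma$ is a \emph{signed} measure with a negative part on $\B(x,r)$, and $\widehat\Psi$ is not $\omega$-subharmonic. Lemma~\ref{lem-compare-oI} requires both inputs to be genuine probability measures, so it cannot be invoked. There is an intrinsic obstruction here: to make $\widehat\sigma \geq 0$ on $\B(x,r)$ you would need $\ddc\widehat\Psi \geq -\omega$, but to propagate the inequality $\widehat\Psi \geq U$ from $\partial\B(x,r)$ into the interior via the maximum principle you need $\ddc(\widehat\Psi - U)\leq 0$, i.e.\ $\ddc\widehat\Psi \leq -\omega$. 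These two requirements force $\ddc\widehat\Psi = -\omega$ exactly, which brings you back to the unknown $U$ itself.

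\textbf{Step~2 is also broken.} You write that $U - (1-\vep)U_{\sigma_r}$ is superharmonic on $\B(x,r)$ and therefore attains its maximum on the boundary; but superharmonic functions obey the \emph{minimum} principle. In fact $(1-\vep)U_{\sigma_r} - U$ is subharmonic on $\B(x,r)$, so the maximum principle gives information in the wrong direction for the bound you want. And even setting this aside, the pointwise estimate $\max_{\partial\B(x,r)} U \leq -(1-\vep)\alpha\pi r^2 + O(r^3)$ is never actually derived: you appeal to a flux identity and the integral bound on $\oI_{\omega,r}(\nu)$, but extracting a uniform pointwise bound on the non-radial boundary trace of $U$ from these integral quantities is precisely the hard part, and no mechanism is given.

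The paper avoids both obstacles by \emph{changing the reference form} rather than trying to dominate $U_{\nu_{\omega,r}}$ by an explicit $\omega$-potential. It builds a probability form $\widetilde\omega \geq \omega$ that is exactly flat on $\B(x,2r)$, then adds a small auxiliary function $\psi$ (of size $O(r^3)$) to $U_{\nu_{\omega,r}}$ so that the sum becomes $\widetilde\omega$-subharmonic with support outside $\B(x,r)$. Proposition~\ref{prop-2r} then gives the exact lower bound $\oI_{\widetilde\omega,r}(\eta) \geq (1+\kappa\delta)^2\alpha^2 e^2\pi^2 r^4$, and a separate calculation shows $|\oI_{\omega,r}(\nu_{\omega,r}) - \oI_{\widetilde\omega,r}(\eta)| = O(r^5)$. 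The point is that passing to $\widetilde\omega$ sidesteps the need for any pointwise control of $U_{\nu_{\omega,r}}$ on $\partial\B(x,r)$ and the positivity problem for the comparison measure.
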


\begin{proof}
We put $\delta:=\rho r$ to simplify the notation, where $\rho$ is the constant in \eqref{ineq-omega-z}. Fix a large positive number $\kappa>3$, whose value will be determined later.
Let $\widetilde\omega$ be a new smooth K\"ahler form  on $X$ satisfying:
\begin{equation}\label{con-wide-omega}
 \int_X \widetilde \omega=1   \quad\text{and}\quad     \begin{cases}    \widetilde \omega=(1+\kappa \delta)\alpha \,i \dd z\wedge \dd \overline z   &\text{ on }\, \B(x,2r)\\
 \widetilde \omega\leq (1+\kappa \delta)\alpha \,i \dd z\wedge \dd \overline z   &\text{ on }\, \B(x,3r)\\
 \widetilde\omega=  \omega &\text{ on }\, \B(x,r_0)\setminus\B(x,3r)\\
 \widetilde\omega\geq   \omega      & \text{ on }\, \B(x,r_0).
\end{cases}   
\end{equation}
 The existence of such $\widetilde\omega$ is guaranteed by \eqref{ineq-omega-z}. 

We only consider $r$ small enough such that  $U_{\nu_{\omega,r}}=0$ on $X\setminus \B(x,r_0)$.  Let $\widetilde\Psi_3$ be the unique solution of the following Dirichlet problem:
$$\widetilde\Psi_3=U_{\nu_{\omega,r}}  \,\text{ on }\, \partial\B(x,r)     \quad\text{and}\quad  \ddc  \widetilde\Psi_3=-\widetilde\omega  \,\text{ on }\, \B(x,r).  $$
Note that $\ddc((1+\kappa)\delta\alpha\pi|z|^2 )=(1+\kappa)\delta\alpha  \,i \dd z\wedge \dd \overline z$ and by \eqref{ineq-omega-z}, \eqref{con-wide-omega}, 
$$\ddc (\widetilde\Psi_3- U_{\nu_{\omega,r}})\geq -(1+\kappa)\delta\alpha  \,i \dd z\wedge \dd \overline z  \,\text{ on }\,   \B(x,r).$$
Applying maximal modulus principle to the function $\widetilde\Psi_3- U_{\nu_{\omega,r}}+(1+\kappa)\delta\alpha\pi|z|^2$, which is subharmonic on $\B(x,r)$, we get
$$       \widetilde\Psi_3- U_{\nu_{\omega,r}}   \leq (1+\kappa) \delta\alpha\pi r^2 \,\text{ on }\,   \B(x,r).       $$

\smallskip

Now consider the following function (letting $|z|:=\dist(z,x)$)
$$
\psi (z)=
\begin{cases}   \widetilde\Psi_3- U_{\nu_{\omega,r}}   - (1+\kappa) \delta\alpha\pi r^2         &\quad\text{for}\quad  z\in \B(x,r)\\
 \delta\alpha\pi \Big(2(\kappa-2) R^2 \log {|z|\over r} -3  r^2 -(\kappa-2)|z|^2 \Big)   &\quad\text{for}\quad  z\in \B(x,R)\setminus \B(x,r)\\
 0 &\quad\text{for}\quad  z\in X\setminus \B(x,R),
\end{cases}
$$
where $R$ is determined by the equation:
$$2(\kappa-2) R^2 \log {R\over r}= 3r^2+(\kappa-2)R^2.$$
Since $R\to \sqrt e \, r$ as $\kappa\to +\infty$,  we can fix a large $\kappa$ such that $\sqrt e\, r<R<2r$.

Observe that  $\psi$ is continuous, $ - (1+\kappa) \delta\alpha\pi r^2  \leq \psi\leq 0$  on $X$, and
\begin{equation}\label{mass-psi}
\ddc \psi=-\widetilde\omega +\omega \,\text{ on }\,\B(x,r),\quad \ddc \psi=-(\kappa-2)\delta\alpha  \,i \dd z\wedge \dd \overline z \,\text{ on }\, \B(x,R)\setminus \overline\B(x,r)
\end{equation}
Moreover, an easy computation of the derivative with respect to $|z|$, gives 
\begin{equation}\label{ddcpsi}
\ddc \psi > 0 \,\text{ on }\, |z|=r \quad\text{and}\quad \ddc \psi = 0 \,\text{ on }\, |z|=R.
\end{equation}

\medskip

\noindent\textbf{Claim:} $U_{\nu_{\omega,r}}+\psi$ is $\widetilde\omega$-subharmonic on $X$.

\proof[Proof of Claim] 
We need to show $\ddc (U_{\nu_{\omega,r}}+\psi)\geq -\widetilde\omega$ on $X$. On $\B(x,r)$, we have
$$\ddc (U_{\nu_{\omega,r}}+\psi)= -\omega-\widetilde\omega+\omega=-\widetilde\omega.$$
On $\B(x,R)\setminus \overline\B(x,r)$, using that $R<2r$ and \eqref{ineq-omega-z} we have
\begin{align*}
\ddc (U_{\nu_{\omega,r}}+\psi) &=-\omega  -(\kappa-2)\delta\alpha  \,i \dd z\wedge \dd \overline z  \\
&\geq   -(1+2\delta)\alpha  \,i \dd z\wedge \dd \overline z-(\kappa-2)\delta\alpha  \,i \dd z\wedge \dd \overline z\\
&= -( 1+\kappa    \delta)\alpha  \,i \dd z\wedge \dd \overline z=-\widetilde\omega.  
\end{align*}
On $\B(x,r_0)\setminus \overline \B(x,R)$, from \eqref{con-wide-omega}, we see that
$$ \ddc (U_{\nu_{\omega,r}}+\psi)=\ddc U_{\nu_{\omega,r}}\geq -\omega\geq -\widetilde\omega.  $$
On $X\setminus \B(x,r_0)$, recall that we assume $U_{\nu_{\omega,r}}=0$ there, where we have
$$\ddc (U_{\nu_{\omega,r}}+\psi) =\ddc U_{\nu_{\omega,r}}=0\geq -\widetilde\omega. $$
It remains to check the cases $|z|=r$ and $|z|=R$. This follows by \eqref{ddcpsi} and the fact $-\omega\geq -\widetilde\omega$ there. We finish the proof of the claim.
\endproof

Observe that $U_{\nu_{\omega,r}}+\psi$ is the $\widetilde\omega$-potential of the probability measure $$\eta:=\ddc (U_{\nu_{\omega,r}}+\psi)+\widetilde\omega,$$
whose support is outside $\B(x,r)$.
Since $\widetilde\omega$ is flat on $\B(x,2r)$, applying Proposition \ref{prop-2r} to $\widetilde\omega$ instead of $\omega$, we get
$$\oI_{\widetilde\omega, r}(\eta)\geq \min\oI_{\widetilde\omega,r} =e^2(1+\kappa\delta)^2\alpha^2\pi^2 r^4 =\alpha^2 e^2 \pi^2 r^4+O(r^5).       $$
By Lemma \ref{lem-diff-nu-eta} below, we finish the proof of the lemma.
\end{proof}

We put all the tedious computations below.

\begin{lemma}\label{lem-diff-nu-eta}
As $r\to 0$, 
$$  \big|\min\oI_{\omega,r} - \oI_{\widetilde\omega, r}(\eta)\big|=O(r^5) $$
\end{lemma}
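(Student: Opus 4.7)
The plan is to expand $\oI_{\widetilde\omega,r}(\eta)-\min\oI_{\omega,r}$ via the Dirichlet-energy form of the functional and bound the four resulting terms one by one. Substituting $\mu=\omega+\ddc U_\mu$ into \eqref{functional I_r} and integrating by parts on the closed surface $X$ yields
$$\oI_\omega(\mu)=-2\int_X U_\mu\,\omega-\int_X U_\mu\,\ddc U_\mu,$$
and likewise with $\widetilde\omega$ in place of $\omega$. Setting $U:=U_{\nu_{\omega,r}}$ and noting that $U+\psi$ is the $\widetilde\omega$-potential of $\eta$ by construction (it vanishes outside a neighborhood of $x$, so $\max(U+\psi)=0$), the symmetric identity $\int_X U\,\ddc\psi=\int_X\psi\,\ddc U$ on compact $X$ simplifies the difference to
\begin{equation*}
\oI_{\widetilde\omega,r}(\eta)-\min\oI_{\omega,r}=-2\int_X U(\widetilde\omega-\omega)-2\int_X\psi\,\widetilde\omega-2\int_X U\,\ddc\psi-\int_X\psi\,\ddc\psi.
\end{equation*}

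Next I would collect size estimates, all coming from data already built into the construction. From the upper-envelope description of $U$ in Lemma~\ref{l:convex} compared against the flat radial model \eqref{defn-Phir} with perturbed constants $\alpha(1\pm O(r))$, one gets $|U|=O(r^2)$ uniformly on $\overline{\B(x,3r)}$. The explicit formula defining $\psi$ gives $|\psi|=O(\delta r^2)=O(r^3)$ on its support $\B(x,R)\subset\B(x,2r)$. Conditions \eqref{con-wide-omega} and \eqref{ineq-omega-z} imply that $\widetilde\omega-\omega$ is supported in $\B(x,3r)$ with Lebesgue density $O(r)$. Finally, \eqref{mass-psi} together with Stokes' theorem on $\B(x,R)$ (using that $\psi$ has vanishing first derivatives on $\partial\B(x,R)$, as one checks directly from the defining equation for $R$) forces the positive singular part of $\ddc\psi$ on $\partial\B(x,r)$ to cancel the $O(r^3)$ bulk contributions, so $|\ddc\psi|(X)=O(r^3)$.

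Multiplying these bounds, the four terms become respectively $O(r^2)\cdot O(r)\cdot O(r^2)=O(r^5)$, $O(r^3)\cdot O(r^2)=O(r^5)$, $\|U\|_{L^\infty(\B(x,2r))}\cdot|\ddc\psi|(X)=O(r^5)$, and $\|\psi\|_\infty\cdot|\ddc\psi|(X)=O(r^6)$, so the total is $O(r^5)$ as claimed. The main obstacle I anticipate is securing the quadratic bound $|U|=O(r^2)$ on $\overline{\B(x,3r)}$; without it the third term could only be estimated as $O(r^4)$, which would lose the lemma. I expect this to follow from a maximum-principle barrier comparison of $U$ against slight enlargements and shrinkings of the flat radial model \eqref{defn-Phir}, in the same spirit as the constructions of $\Psi_1$ and $\Psi_2$ in the preceding lemma. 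Everything else is careful order-of-magnitude bookkeeping.
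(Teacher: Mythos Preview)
Your decomposition is exactly the one the paper arrives at: after expanding both functionals in Dirichlet form and using the symmetry $\int_X U\,\ddc\psi=\int_X\psi\,\ddc U$, the difference reduces to the four terms you list (the paper writes them as two pairs and then re-expands, but the outcome is identical). Your bounds on terms~2 and~4 match the paper's Lemmas~\ref{lem-int-psi} and~\ref{lem-intpsiU}.

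Where you diverge is in handling terms~1 and~3. You rely on a pointwise bound $|U_{\nu_{\omega,r}}|=O(r^2)$ on $\overline{\B(x,3r)}$; the paper never proves or uses this. Instead:
\begin{itemize}
\item For $\int_X U(\widetilde\omega-\omega)$, the paper observes that $|\widetilde\omega-\omega|\leq C\delta\,\omega$ on $\B(x,3r)$ (from \eqref{ineq-omega-z} and \eqref{con-wide-omega}) and then invokes the \emph{integral} bound $\int_X|U_{\nu_{\omega,r}}|\,\omega\leq\oI_{\omega,r}(\nu_{\omega,r})=O(r^4)$, which is immediate from the already-established upper bound in Lemma~\ref{lem-lower bound}. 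This gives $O(\delta)\cdot O(r^4)=O(r^5)$ with no pointwise information on $U$.
\item For $\int_X\psi\,\ddc U$, the paper applies Cauchy--Schwarz on the Dirichlet pairing:
\[
\Big|\int_X\psi\,\ddc U\Big|\leq\Big|\int_X\psi\,\ddc\psi\Big|^{1/2}\Big|\int_X U\,\ddc U\Big|^{1/2}=O(r^3)\cdot O(r^2)=O(r^5),
\]
the second factor again coming from $\oI_{\omega,r}(\nu_{\omega,r})=O(r^4)$.
\end{itemize}
So the paper's route is strictly more economical: it uses only the output of Lemma~\ref{lem-lower bound} and avoids your ``main obstacle'' entirely. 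Your barrier argument for $|U|=O(r^2)$ is plausible and would close the gap, but it is genuine extra work (one has to control $\min_{\partial\B(x,r)}U_{\nu_{\omega,r}}$ without circularity through Lemma~\ref{l:convex}), and the Cauchy--Schwarz trick shows it is unnecessary.
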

\begin{proof}
By definition \eqref{functional I_r}, $\min\oI_{\omega,r} - \oI_{\widetilde\omega, r}(\eta)$ is equal to
$$ \int_X  (U_{\nu_{\omega,r}}+\psi) \,\widetilde\omega-\int_X  U_{\nu_{\omega,r}}  \,\omega + \int_X (U_{\nu_{\omega,r}}+\psi) \,\dd \eta  -   \int_{X}  U_{\nu_{\omega,r}}  \,\nu_{\omega,r}.  $$

By Lemmas \ref{lem-Util-omega} and \ref{lem-int-psi} below,
\begin{align*}
\Big|\int_X  (U_{\nu_{\omega,r}}+\psi) \,\widetilde\omega-\int_X  U_{\nu_{\omega,r}}  \,\omega  \Big| 
    \leq  \Big|\int_{X}  U_{\nu_{\omega,r}} \,(\widetilde\omega-\omega )\Big|+\Big| \int_{X} \psi \,\widetilde\omega    \Big|= O(r^5). 
\end{align*}

To bound another difference, we write
\begin{align*}
&\int_X (U_{\nu_{\omega,r}}+\psi) \,\dd \eta =  \int_X (U_{\nu_{\omega,r}}+\psi) \big(\ddc (U_{\nu_{\omega,r}}+\psi)+\widetilde\omega \big) \\
&= \int_X U_{\nu_{\omega,r}} \,\dd \nu_{\omega,r} +2 \int_X  \psi  \,  \ddc U_{\nu_{\omega,r}} + \int_X \psi  \,\ddc\psi+\int_X  \psi \,\widetilde\omega+ \int_X U_{\nu_{\omega,r}}  ( \widetilde\omega-\omega )
\end{align*}
where we use Stoke's formula to identify $\int_X  \psi \,  \ddc U_{\nu_{\omega,r}} $ with $\int_X  U_{\nu_{\omega,r}}\ddc\psi$.
Therefore,
\begin{align*}
\Big| \int_X (U_{\nu_{\omega,r}}+\psi) \,\dd \eta  &-   \int_{X}  U_{\nu_{\omega,r}}  \,\nu_{\omega,r}    \Big|    \\  
&\leq 2\Big|  \int_X  \psi  \,  \ddc U_{\nu_{\omega,r}}\Big| + \Big|\int_X \psi  \,\ddc\psi\Big|+\Big|\int_X  \psi \,\widetilde\omega \Big|+ \Big|\int_X U_{\nu_{\omega,r}}   ( \widetilde\omega-\omega )\Big|.
\end{align*}
Last sum is $O(r^5)$ by Lemmas   \ref{lem-Util-omega}, \ref{lem-int-psi} and \ref{ineq-Unu}  below. The result follows.
\end{proof}

\begin{lemma}\label{lem-Util-omega}
As $r\to 0$,
$$\Big|\int_{X}  U_{\nu_{\omega,r}} \,(\widetilde\omega-\omega )\Big|=O(r^5).$$
\end{lemma}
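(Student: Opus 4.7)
The plan is to combine the fact that $\widetilde\omega - \omega$ is localized in a small ball with a global $L^1(\omega)$ bound on $U_{\nu_{\omega,r}}$ coming from the variational estimate; no pointwise control on $U_{\nu_{\omega,r}}$ is needed. First, by \eqref{con-wide-omega} we have $\widetilde\omega = \omega$ on $X \setminus \B(x,3r)$, so
$$
\int_X U_{\nu_{\omega,r}}(\widetilde\omega - \omega) = \int_{\B(x,3r)} U_{\nu_{\omega,r}}(\widetilde\omega - \omega).
$$

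Next I would derive a density comparison $0 \le \widetilde\omega - \omega \le C r \cdot \omega$ on $\B(x,3r)$, with $C$ independent of $r$. Write $\delta := \rho r$. On $\B(x,3r)$ we have $|z|\le 3r$, so \eqref{ineq-omega-z} gives $\omega \ge (1-3\delta)\alpha \cdot i\,dz\wedge d\bar z$, while \eqref{con-wide-omega} gives $\omega \le \widetilde\omega \le (1+\kappa\delta)\alpha \cdot i\,dz\wedge d\bar z$. Subtracting yields
$$
0 \le \widetilde\omega - \omega \le (\kappa+3)\rho\alpha\, r \cdot i\,dz\wedge d\bar z \le C r \cdot \omega \quad \text{on } \B(x,3r),
$$
where in the last step I use $1-3\delta \ge 1/2$ for $r$ small, and $C$ depends only on $\kappa$, $\rho$, $\alpha$.

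For the $L^1(\omega)$ bound on $U_{\nu_{\omega,r}}$, recall from \eqref{functional I_r} that $\oI_{\omega,r}(\nu_{\omega,r}) = -\int U_{\nu_{\omega,r}}\,\omega - \int U_{\nu_{\omega,r}}\,\dd\nu_{\omega,r}$. Since $U_{\nu_{\omega,r}}\le 0$, both terms are non-negative, so by Lemma \ref{lem-lower bound},
$$
\int_X |U_{\nu_{\omega,r}}|\,\omega = -\int_X U_{\nu_{\omega,r}}\,\omega \le \oI_{\omega,r}(\nu_{\omega,r}) = \min\oI_{\omega,r} \le \alpha^2 e^2 \pi^2 r^4 + O(r^5).
$$
Combining the two ingredients,
$$
\Big|\int_X U_{\nu_{\omega,r}}(\widetilde\omega - \omega)\Big| \le C r \int_{\B(x,3r)} |U_{\nu_{\omega,r}}|\,\omega \le C r \cdot O(r^4) = O(r^5),
$$
which is the claimed bound.

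There is no serious obstacle: the whole argument is essentially a H\"older-type estimate relying on the uniform-in-$r$ density comparison $\widetilde\omega - \omega \lesssim r\cdot\omega$, and the key conceptual point is to avoid a pointwise estimate on $U_{\nu_{\omega,r}}$ (which would require a more delicate envelope/maximum-principle argument) by replacing it with the $L^1(\omega)$ estimate supplied by the variational bound of Lemma \ref{lem-lower bound}.
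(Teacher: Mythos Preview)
Your proof is correct and follows essentially the same approach as the paper: localize to $\B(x,3r)$, compare $\widetilde\omega-\omega$ to $Cr\cdot\omega$ via \eqref{ineq-omega-z} and \eqref{con-wide-omega}, and then invoke the $L^1(\omega)$ bound $\int_X|U_{\nu_{\omega,r}}|\,\omega\le\min\oI_{\omega,r}=O(r^4)$ from Lemma~\ref{lem-lower bound}. One small imprecision: \eqref{con-wide-omega} only gives $\widetilde\omega=\omega$ on $\B(x,r_0)\setminus\B(x,3r)$, not on all of $X\setminus\B(x,3r)$; to reduce the integral to $\B(x,3r)$ you also need the standing assumption $U_{\nu_{\omega,r}}=0$ on $X\setminus\B(x,r_0)$, exactly as the paper does.
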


\begin{proof}
Remind that $\omega=\widetilde\omega$ on $\B(x,r_0)\setminus \B(x,3r)$ and $\supp(U_{\nu_{\omega,r}})\subset \B(x,r_0)$, implying
$$  \int_{X}  U_{\nu_{\omega,r}} \,(\widetilde\omega-\omega )=  \int_{\B(x,3r)}  U_{\nu_{\omega,r}} \,(\widetilde\omega-\omega ).$$

From \eqref{ineq-omega-z} and \eqref{con-wide-omega}, we see that  
$$ |\omega-\widetilde\omega|\leq  ( \kappa+3 )\delta     \alpha \,i \dd z\wedge \dd \overline z  \quad \text{on }\, \B(x,3r)   $$
and
$$      (1+3\delta)^{-1} \omega  \leq   \alpha \,i \dd z\wedge \dd \overline z \leq  (1-3\delta)^{-1} \omega       \quad \text{on }\, \B(x,3r).  $$
Since $U_{\nu_{\omega,r}}$ is non-positive, Lemma \ref{lem-lower bound} gives
\begin{equation}\label{ineq-Unu}
\int_{X}  |U_{\nu_{\omega,r}}|  \,\omega < \int_{X}  |U_{\nu_{\omega,r}}|  \,\omega  +  \int_{X}  |U_{\nu_{\omega,r}}|  \,\nu_{\omega,r} =\oI_{\omega,r}(\nu_{\omega,r}) \leq \alpha^2 e^2 \pi^2r^4 +O(r^5).     
\end{equation}
Therefore,
\begin{align*}
\Big|\int_{\B(x,3r)}  U_{\nu_{\omega,r}} \,(\widetilde\omega-\omega ) 
&\leq   \int_{\B(x,3r)}  |U_{\nu_{\omega,r}}| (\kappa +3)\delta \alpha  \,i \dd z\wedge \dd \overline z\\
&\leq {(\kappa +3)\delta\over 1-3\delta} \int_{\B(x,3r)}  |U_{\nu_{\omega,r}}|  \,\omega
\leq 2\kappa \delta\int_{X}  |U_{\nu_{\omega,r}}|  \,\omega=O(r^5).
\end{align*}
This proves the lemma..
\end{proof}

\begin{lemma}\label{lem-int-psi}
As $r\to 0$,
$$\Big| \int_{X} \psi \,\widetilde\omega    \Big|=O(r^5).$$
\end{lemma}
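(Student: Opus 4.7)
The plan is to exploit the fact that $\psi$ is both uniformly small and supported on a very small set, so the integral is controlled by a trivial product estimate. No fine cancellation is needed here; the whole point is that $\psi = 0$ outside $\B(x,R)$ with $R = O(r)$, and $\psi$ itself is of order $O(r^3)$.

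First I would record the sup bound $\|\psi\|_\infty = O(r^3)$. Inside $\B(x,r)$ the definition gives $\psi = \widetilde\Psi_3 - U_{\nu_{\omega,r}} - (1+\kappa)\delta\alpha\pi r^2$, and the maximum principle estimate already proved in the text, $\widetilde\Psi_3 - U_{\nu_{\omega,r}} \leq (1+\kappa)\delta\alpha\pi r^2$ on $\B(x,r)$, together with the obvious lower bound $\widetilde\Psi_3 - U_{\nu_{\omega,r}} \geq 0$ on $\partial \B(x,r)$ propagated inward by the same maximum principle argument applied to $U_{\nu_{\omega,r}}-\widetilde\Psi_3$, forces $-(1+\kappa)\delta\alpha\pi r^2 \leq \psi \leq 0$ there. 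On the annulus $\B(x,R)\setminus\B(x,r)$ the explicit formula plus the defining equation for $R$ gives $\psi(r) = -(1+\kappa)\delta\alpha\pi r^2$ and $\psi(R) = 0$, and a direct monotonicity check in $|z|$ shows $\psi$ stays inside this range. Outside $\B(x,R)$ the function is zero. Since $\delta = \rho r$, this yields $\|\psi\|_\infty \leq (1+\kappa)\rho\alpha\pi\, r^3$.

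Next I would estimate $\widetilde\omega(\B(x,R))$. Because $R<2r$ for our choice of $\kappa$, condition \eqref{con-wide-omega} gives $\widetilde\omega = (1+\kappa\delta)\alpha\, i\,dz\wedge d\overline z$ on $\B(x,R)$, so
\begin{equation*}
\widetilde\omega(\B(x,R)) = (1+\kappa\delta)\alpha\pi R^2 = O(r^2),
\end{equation*}
using $R = \sqrt e\, r + O(r^2)$ as in the previous lemma.

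Finally, combining the two estimates,
\begin{equation*}
\Big|\int_X \psi\,\widetilde\omega\Big| = \Big|\int_{\B(x,R)} \psi\,\widetilde\omega\Big| \leq \|\psi\|_\infty\cdot \widetilde\omega(\B(x,R)) = O(r^3)\cdot O(r^2) = O(r^5),
\end{equation*}
which is exactly the claimed bound. The argument is entirely quantitative and there is no genuine obstacle; the only thing to be careful about is verifying that $\psi$ really does stay between $-(1+\kappa)\delta\alpha\pi r^2$ and $0$ on all of $X$, which reduces to the two maximum-principle estimates on $\B(x,r)$ and the explicit computation on the annulus.
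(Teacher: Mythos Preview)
Your proof is correct and follows essentially the same approach as the paper's: bound $\|\psi\|_\infty$ by $(1+\kappa)\delta\alpha\pi r^2 = O(r^3)$ (the paper simply cites the earlier observation that $-(1+\kappa)\delta\alpha\pi r^2\le\psi\le 0$ on $X$, whereas you re-derive it), and multiply by the $\widetilde\omega$-area of the support, which is $O(r^2)$. One small slip worth correcting: the $R$ in this section is determined by $2(\kappa-2)R^2\log(R/r)=3r^2+(\kappa-2)R^2$, which is scale-invariant and gives $R=c_\kappa r$ for a constant $c_\kappa\in(\sqrt e,2)$ depending only on $\kappa$, not the ``$R=\sqrt e\,r+O(r^2)$'' from Lemma~\ref{lem-lower bound}; but since you only use $R<2r$, which the paper already fixed, this does not affect the argument.
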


\begin{proof}
Recall that $\supp(\psi)\subset  \B(x,2r),$, the above integral is actually integrating over $\B(x,2r)$. Using $- (1+\kappa) \delta\alpha\pi r^2  \leq \psi\leq 0$  on $X$,
we have
\begin{align*}
\Big|\int_{\B(x,2r)} \psi \,\widetilde\omega    \Big|  & =
 \Big|\int_{\B(x,2r)} |\psi| (1+\kappa \delta)\alpha \,i \dd z\wedge \dd \overline z\\
&\leq  (1+\kappa) \delta\alpha\pi r^2    (1+\kappa \delta)\alpha  \int_{\B(x,2r)} \,i \dd z\wedge \dd \overline z\leq  2\delta\alpha^2\pi r^2 O(r^2)=O(r^5).
\end{align*}
This finishes the proof.
\end{proof}

\begin{lemma}\label{lem-intpsiU}
As $r\to 0$,
$$   \Big|\int_X  \psi \,  \ddc U_{\nu_{\omega,r}}\Big|=O(r^5),\quad  \Big|\int_X  \psi \,  \ddc \psi\Big|=O(r^6)  ,\quad \Big|\int_X  U_{\nu_{\omega,r}}   \ddc U_{\nu_{\omega,r}}\Big| =O(r^4).     $$
\end{lemma}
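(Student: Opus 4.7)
The plan is to handle the three estimates separately; they are of quite different nature, and only the second requires real work.

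The third estimate is immediate from what has already been proved. Using $\ddc U_{\nu_{\omega,r}} = \nu_{\omega,r}-\omega$ (by the very definition of $\omega$-potential) and the fact that $U_{\nu_{\omega,r}} \le 0$, one gets
$$\Big|\int_X U_{\nu_{\omega,r}}\,\ddc U_{\nu_{\omega,r}}\Big| \le \int_X |U_{\nu_{\omega,r}}|\, d\nu_{\omega,r} + \int_X |U_{\nu_{\omega,r}}|\,\omega = \oI_{\omega,r}(\nu_{\omega,r}),$$
which is $O(r^4)$ by \eqref{ineq-Unu}.

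For the first estimate, the idea is to exploit that $\psi$ is supported in $\overline{\B(x,R)} \subset \B(x,2r)$ and satisfies $\|\psi\|_\infty \le (1+\kappa)\delta\alpha\pi r^2 = O(r^3)$, since $\delta=\rho r$. Writing $\ddc U_{\nu_{\omega,r}} = \nu_{\omega,r}-\omega$ once more, it suffices to bound the total mass of both measures on $\B(x,2r)$. For $\omega$ this is $O(r^2)$ by the area formula under \eqref{ineq-omega-z}. For $\nu_{\omega,r}$, recall from the proof of Proposition \ref{prop-r0} the decomposition $\nu_{\omega,r} = \omega|_{S_{\omega,r}} + \nu_{\bd r}$: since $\{U_{\nu_{\omega,r}}\neq 0\}$ shrinks to $x$, the boundary mass $\nu_{\bd r}(X) = 1-\omega(S_{\omega,r}) = \omega(\overline{\B(x,r)}\cup\{U_{\nu_{\omega,r}}\neq 0\})$ is $O(r^2)$, and clearly $\omega|_{S_{\omega,r}}(\B(x,2r))\le \omega(\B(x,2r))=O(r^2)$. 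Putting it together,
$$\Big|\int_X \psi\,\ddc U_{\nu_{\omega,r}}\Big| \le \|\psi\|_\infty\,\big(\nu_{\omega,r}(\B(x,2r))+\omega(\B(x,2r))\big)=O(r^3)\cdot O(r^2)=O(r^5).$$

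The second estimate is the delicate one, and the main obstacle will be controlling the total variation of $\ddc\psi$ — once we have $|\ddc\psi|(X)=O(r^3)$, the bound $|\int \psi\,\ddc\psi|\le \|\psi\|_\infty\cdot|\ddc\psi|(X)=O(r^6)$ follows at once from $\|\psi\|_\infty=O(r^3)$. To control $|\ddc\psi|(X)$ I would decompose according to \eqref{mass-psi}. On $\B(x,r)$ we have $\ddc\psi=\omega-\widetilde\omega$, whose total variation is bounded by $(\kappa+3)\delta\alpha\cdot\mathrm{Area}(\B(x,r))=O(r^3)$ thanks to \eqref{ineq-omega-z} and \eqref{con-wide-omega}; on the annulus $\B(x,R)\setminus\overline{\B(x,r)}$ we have $\ddc\psi = -(\kappa-2)\delta\alpha\,i\,dz\wedge d\bar z$ of total mass $O(\delta)\cdot O(r^2)=O(r^3)$; and \eqref{ddcpsi} shows that the residual singular contribution is a \emph{positive} measure concentrated on $\partial\B(x,r)$ (there is no mass on $\partial\B(x,R)$). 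Since $\psi$ is continuous and compactly supported in $\B(x,R)$, Stokes' formula gives $\int_X\ddc\psi=0$, which forces the mass of this boundary measure to equal minus the sum of the two bulk contributions, hence $O(r^3)$ as well. Adding the three pieces yields $|\ddc\psi|(X)=O(r^3)$ and completes the proof.
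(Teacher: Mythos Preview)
Your proofs of the second and third estimates coincide with the paper's. For the third you invoke \eqref{ineq-Unu}, and for the second you show $|\ddc\psi|(X)=O(r^3)$ by splitting into the two bulk pieces from \eqref{mass-psi} and the positive singular piece on $\partial\B(x,r)$, the latter controlled via $\int_X\ddc\psi=0$; this is exactly what the paper does.

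The first estimate is where your argument diverges, and it contains a gap. You need $\nu_{\omega,r}(\B(x,2r))=O(r^2)$, which via the decomposition $\nu_{\omega,r}=\omega|_{S_{\omega,r}}+\nu_{\bd r}$ reduces to $\nu_{\bd r}(X)=\omega\big(\overline{\B(x,r)}\cup\{U_{\nu_{\omega,r}}\neq 0\}\big)=O(r^2)$. Your justification, ``$\{U_{\nu_{\omega,r}}\neq 0\}$ shrinks to $x$'', carries no quantitative rate: that phrase appears in the paper only in the \emph{flat} setting of Proposition~\ref{prop-r0}, whereas here in Section~\ref{sec-bundle} there is no explicit formula for $U_{\nu_{\omega,r}}$, and the only containment established is $\{U_{\nu_{\omega,r}}\neq 0\}\subset\B(x,r_0)$ with $r_0$ \emph{fixed}. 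Proving $\{U_{\nu_{\omega,r}}\neq 0\}\subset\B(x,Cr)$ (or, equivalently, a uniform upper bound $U_{\nu_{\omega,r}}\le -cr^2$ on $\partial\B(x,r)$, which combined with \eqref{ineq-Unu} would yield $\nu_{\bd r}(X)=O(r^2)$) would require an additional barrier or comparison argument that you have not supplied. The paper sidesteps this issue entirely by applying Cauchy--Schwarz and Stokes' formula,
\[
\Big|\int_X\psi\,\ddc U_{\nu_{\omega,r}}\Big|=\Big|\int_X\dd\psi\wedge\dc U_{\nu_{\omega,r}}\Big|\le\Big|\int_X\psi\,\ddc\psi\Big|^{1/2}\cdot\Big|\int_X U_{\nu_{\omega,r}}\,\ddc U_{\nu_{\omega,r}}\Big|^{1/2}=O(r^3)\cdot O(r^2),
\]
thereby reducing the first estimate to the second and third, which you have already established correctly.
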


\begin{proof}
By Cauchy-Schwarz inequality and Stoke's formula,
\begin{align*}
\Big|\int_X  \psi  \, \ddc U_{\nu_{\omega,r}}\Big| =\Big|\int_X  \dd\psi \wedge  \dc U_{\nu_{\omega,r}}\Big|&\leq \Big|\int_X  \dd\psi \wedge  \dc \psi\Big| ^{1/2} \cdot \Big|\int_X  \dd U_{\nu_{\omega,r}} \wedge  \dc U_{\nu_{\omega,r}}\Big|^{1/2} \\
&=\Big|\int_X  \psi   \, \ddc \psi \Big|^{1/2} \cdot \Big|\int_X  U_{\nu_{\omega,r}}   \ddc U_{\nu_{\omega,r}}\Big|^{1/2}.
\end{align*}
Thus, to prove the lemma, we only need to show the second and third equations. The third one is followed by \eqref{ineq-Unu}. For the second estimate, we first find an upper bound for the mass of the measure $|\ddc \psi|$, whose support is contained in $\overline \B(x,R)$. We already know it does not have mass on $\partial\B(x,R)$ by \eqref{ddcpsi}.  Therefore, by Stoke's fromula,
$$0= \int_X \ddc\psi =\int_{\B(x,R)\setminus \partial\B(x,r)} \ddc \psi  +\int_{\partial\B(x,r)}  \ddc \psi.$$
From \eqref{mass-psi}, it is not hard to see that 
$$ \Big|\int_{\B(x,R)\setminus \partial\B(x,r)} \ddc \psi  \Big| =O(r^3).  $$
So, the mass of $|\ddc\psi|$ is  $O(r^3)$ and 
$$\Big|\int_X  \psi  \,  \ddc \psi\Big| \leq \max|\psi|\cdot \int_X |\ddc \psi| \leq (1+\kappa) \delta\alpha\pi r^2 \cdot   O(r^3)=O(r^6).
$$
The proof of the lemma is finished.
\end{proof}

We conclude from this section that 

\begin{proposition}\label{prop-r1}
Under condition \eqref{eqn-flat-3}, as $r\to 0$, we have  
$$|\min\oI_{\omega,r} -\alpha^2 e^2  \pi ^2 r^4| =O(r^5).$$
\end{proposition}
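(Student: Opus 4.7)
The plan is to assemble Proposition~\ref{prop-r1} directly from the two bounds established immediately above. Under condition~\eqref{eqn-flat-3}, Lemma~\ref{lem-lower bound} furnishes the upper bound $\min\oI_{\omega,r}\le \alpha^2 e^2\pi^2 r^4 + O(r^5)$, and the unnamed lemma that follows it provides the matching lower bound $\min\oI_{\omega,r}\ge \alpha^2 e^2\pi^2 r^4 - O(r^5)$. Subtracting gives
$$|\min\oI_{\omega,r} -\alpha^2 e^2\pi^2 r^4| = O(r^5),$$
so no additional argument is needed at the level of the proposition itself; the entire substance sits in those two lemmas, and my proposal is simply to invoke them.

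For completeness, let me sketch how I would obtain each of the two lemmas if I had to redo the work. For the upper direction, the natural move is to exhibit an explicit competitor by perturbing the flat-case potential $U_{\sigma_r}$ from \eqref{defn-Phir}: multiply by $(1+\varepsilon)$ with $\varepsilon=2\rho r$ to absorb the curvature discrepancy in~\eqref{ineq-omega-z}, and re-solve the free boundary by the outer radius $R$ determined transcendentally as in \eqref{equa-R}. A direct calculation in the flat $z$-coordinate on $\B(x,2r)$ gives an energy bounded by $\alpha^2\pi^2(1+\varepsilon)(1-\varepsilon)R^4$, and the expansion $R=\sqrt{e}\,r+O(r^2)$ produces the desired $O(r^5)$ correction.

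For the lower direction, I would replace $\omega$ by a slightly larger, locally flat K\"ahler form $\widetilde\omega$ agreeing with $\omega$ outside $\B(x,3r)$ and equal to $(1+\kappa\delta)\alpha\, i\,\dd z\wedge \dd\overline z$ on $\B(x,2r)$, as in~\eqref{con-wide-omega}. Adding a cutoff correction $\psi$ to $U_{\nu_{\omega,r}}$ so that the sum is $\widetilde\omega$-subharmonic, the probability measure $\eta=\ddc(U_{\nu_{\omega,r}}+\psi)+\widetilde\omega$ lies in $\cM(X\setminus\B(x,r))$, and Proposition~\ref{prop-2r} applied to the flat form $\widetilde\omega$ gives
$$\oI_{\widetilde\omega,r}(\eta)\ge \alpha^2 e^2(1+\kappa\delta)^2\pi^2 r^4 = \alpha^2 e^2\pi^2 r^4 + O(r^5).$$

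The main obstacle is bounding $|\min\oI_{\omega,r} - \oI_{\widetilde\omega,r}(\eta)|$ by $O(r^5)$, which I would handle by expanding the difference via Stokes' formula and estimating each resulting piece separately: an integral against $\widetilde\omega-\omega$ (supported in $\B(x,3r)$ where the densities differ by $O(r)$), cross terms involving $\psi$, and the mixed Dirichlet-type integral $\int \psi\,\ddc U_{\nu_{\omega,r}}$ dispatched by Cauchy--Schwarz together with the a priori control $\int|U_{\nu_{\omega,r}}|\,\omega = O(r^4)$ coming from the upper bound itself. This is exactly what the book-keeping lemmas on $\psi$ and $U_{\nu_{\omega,r}}$ are designed to do, and once they are in hand the proposition follows by concatenation.
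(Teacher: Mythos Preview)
Your proposal is correct and matches the paper exactly: the proposition is stated without a separate proof, introduced only by ``We conclude from this section that,'' so it is simply the concatenation of Lemma~\ref{lem-lower bound} (upper bound) and the unnamed lower-bound lemma. Your sketches of those two lemmas also track the paper's arguments faithfully, including the competitor construction with $R$ from~\eqref{equa-R}, the auxiliary flat form $\widetilde\omega$ with the correction $\psi$, and the bookkeeping via Lemmas~\ref{lem-Util-omega}--\ref{lem-intpsiU}.
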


\medskip
\section{{Perturb distance metric}}\label{sec-dis}

We are now ready to prove the main theorem.
In the general case, the open ball $\B(x,r)$ is not a Euclidean disc. We will use  a ``sandwich argument", finding  two  discs to bound it, where we  already know how to estimate the functional $\oI_{\omega,r}$. Finally,  the result will follow by the monotone property of $\oI_{\omega,r}$ on $r$.

\medskip

Fix an $r_0>0$ and a local coordinate $z$ on $\B(x,r_0)$ such that $z=0$ at $x$.  Since $\omega_0$ is smooth, we have near $x$, 
$$\omega_0(z)= (1+O(|z|) \beta \,i \dd z\wedge \dd \overline z      $$
for some $\beta>0$. In what follows, for $r$ small, we use $\B_{\omega_\C}(x,r)$ to denote the open ball of radius $r$ centered at $x$  with respect to the flat distance metric
$$\omega_\C:= 1/2 \, i \dd z\wedge \dd \overline z.   $$
We set $|z|:=\dist_{\omega_\C} (z,x)$.
There exists a constant $\varrho>0$ such that 
\begin{equation}\label{ineq-omega0-z}
(1-\varrho |z|)\beta   \,i \dd z\wedge \dd \overline z   \leq \omega_0 \leq (1+\varrho |z|) \beta \,i \dd z\wedge \dd \overline z \quad\text{on}\quad \B(x,r_0).
\end{equation}

\smallskip

\begin{proof}[Proof of Theorem \ref{thm-main-hole}]
For any point $y$ with $\dist_{\omega_\C}(x,y)=r$, by \eqref{ineq-omega0-z}, we have
$$\dist_{\omega_0}(x,y) \leq \int_{[x,y]} \sqrt {(1+\varrho |z|) \beta}  \,|\dd z|\leq  \int_{[x,y]} \sqrt {(1+\varrho r) \beta}  \,|\dd z|=r\sqrt {2(1+\varrho r) \beta}.  $$
It follows that 
$$\B_{\omega_\C}(x,r) \subset \B \big(x,r\sqrt {2(1+\varrho r) \beta} \big).$$

On the other hand, using \eqref{ineq-omega0-z} again, for any smooth curve $\Gamma$ with end points $x$ and $y$, the length of $\Gamma$ with respect to $\omega_0$ is bounded from below by
$$ \int_\Gamma    \sqrt {(1-\varrho |z|) \beta}  \,|\dd z|\geq \sqrt {(1-\varrho r) \beta} \int_\Gamma    |\dd z|\geq     r\sqrt {2(1-\varrho r) \beta}.         $$
This gives $\dist_{\omega_0}(x,y)\geq  r\sqrt {2(1-\varrho r) \beta}$ and hence,
$$\B\big(x,r\sqrt {2(1-\varrho r) \beta} \big)\subset \B_{\omega_\C}(x,r).$$

We conclude that 
$$ \B_{\omega_\C} \bigg(x,{r\over \sqrt {2(1+\varrho r) \beta}}     \bigg)   \subset \B(x,r)\subset  \B_{\omega_\C} \bigg(x,{r\over \sqrt {2(1-\varrho r) \beta}}     \bigg).           $$

By definition, $\min\oI_{\omega,r}$ is monotone increasing in $r$, and its value is independent of the choice of $\omega_0$. We deduce from  Proposition \ref{prop-r1} that 
$$    \alpha^2 e^2  \pi ^2 \Big({r\over \sqrt {2(1+\varrho r) \beta}}\Big)^4  - O(r^5) \leq\min   \oI_{\omega, r}\leq  \alpha^2 e^2  \pi ^2 \Big({r\over \sqrt {2(1-\varrho r) \beta}}\Big)^4  +O(r^5) .     $$
After simplifying the expression, we get
$$ \Big| \min   \oI_{\omega, r} - {\alpha^2\over 4\beta^2}  e^2  \pi^2 r^4 \Big| =O(r^5).      $$
This completes the proof of Theorem \ref{thm-main-hole} with $C_x=\alpha^2/(4\beta^2)$.
\end{proof}

\medskip

\end{document}